\theoremstyle{plain}
\theoremstyle{plain}
\newtheorem{theorem}{Theorem}[section]
\newtheorem{lemma}[theorem]{Lemma}
\newtheorem{proposition}[theorem]{Proposition}
\newtheorem{remark}[theorem]{Remark}
\newtheorem{quest}[theorem]{Question}
\theoremstyle{definition}
\theoremstyle{remark}
\newcommand{\Z}{\mathbb{Z}} 
\newcommand{\R}{\mathbb{R}}
\newcommand{\C}{\mathbb{C}}
\newcommand{\co}{\colon\thinspace}
\def\dfn#1{{\em #1}}
\begin{document} 

\title{On generalizing Lutz twists}

\author{John B.\ Etnyre}
\address{
    School of Mathematics,
    Georgia Institute of Technology,
    686 Cherry St.,
    Atlanta, GA  30332-0160}
\email{etnyre@math.gatech.edu}
\urladdr{http://math.gatech.edu/\char126 etnyre}

\author{Dishant M.\ Pancholi}

\address{ Mathematics Group \\
	International Centre for Theoretical Physics\\
	Trieste, Italy }
\email{dishant@ictp.it}


\subjclass{57R17; 53D35}

\begin{abstract}

We give a possible generalization  of a Lutz twist to all
dimensions. This reproves the fact that every contact manifold can be given a
non-fillable contact structure and also shows great flexibility in the manifolds
that can be realized as cores of overtwisted families. We moreover show that
$\R^{2n+1}$ has at least three distinct contact structures. \\

\noindent
This version of the paper contains both the texts of the published version of the paper together with an Erratum to the published version appended to the end.

\end{abstract}

\maketitle

\section{Introduction}
Lutz twists have been a fundamental tool in studying contact structures in dimension 3. They have been used to construct contact structures on all closed oriented 3-manifolds and to manipulate the homotopy class of the plane field of a given contact structure. In particular, Lutz \cite{Lutz71} used this construction to give the first proof that all homotopy classes of plane fields can be realized by contact structures. We recall that a Lutz twist alters a contact structure on a solid torus neighborhood of a transverse curve by introducing an $S^1$-family of overtwisted disks (see below for a precise definition). An overtwisted disk in a contact 3-manifold $(M,\xi)$ is an embedded disk in $M$ that is tangent to $\xi$ only along its boundary and at one interior point. A contact manifold is called overtwisted if it contains an overtwisted disk and otherwise it is called tight. Starting with Eliashberg's fundamental paper \cite{Eliashberg92a} defining the tight versus overtwisted dichotomy, these notions have taken a central role in 3 dimensional contact geometry. Overtwisted contact structures are completely classified \cite{Eliashberg89} and exhibit a great deal of flexibility, appearing to be fairly topological in nature. Much of the insight into such structures has come from careful analysis of the Lutz twist construction and natural questions that arose from it.

There is not a great deal known about contact structures in dimensions above 3. Specifically, there are few constructions of contact structures and few tools to manipulate a given contact structure.  We introduce one such tool by giving a possible generalization of a Lutz twist to all odd dimensions. As a consequence we 
reprove and slightly strengthen a result
proved by Niederkr\"uger and van Koert in \cite{NiederkrugerVanKoert07} that every $(2n+1)$--dimensional manifold that has a contact structure can be given a non-fillable 
contact structure. The proof in fact produces an embedded overtwisted family (that is a plastikstufe in the language of \cite{Niederkrueger06}) by changing  the given contact
structure in a small neighborhood of any $(n-1)$--dimensional isotropic
submanifold $B$ (with trivial conformal symplectic normal bundle). The
overtwisted family is modeled on $B$ (that is $B$ is the elliptic singular locus
of the family, see Subsection~\ref{sec:families} below). This construction is
analogous to creating an overtwisted disk  in dimension three by performing a
Lutz twist along a knot in the neighborhood of a point (an overtwisted disk is a
3--dimensional overtwisted family). Overtwisted families, in dimensions above
three, were first considered in \cite{Niederkrueger06} as an obstruction to
symplectic fillability of a contact structure, though precursors of them go back
to Gromov's work \cite{Gromov85}.   In \cite{Presas07} Presas gave the first
examples of overtwisted families in a closed contact manifold of dimension
greater than three.

Our main result is the following. 
\begin{theorem}\label{main}
Let $(M,\xi)$ be a contact manifold of dimension $2n+1$ and let $B$ be an
$(n-1)$--dimensional isotropic submanifold with trivial conformal symplectic
normal bundle. Then we may alter $\xi$ in any neighborhood of $B$ to a contact
structure $\xi'$ that contains an overtwisted family modeled on $B.$ Moreover,
we may assume that $\xi'$ is homotopic to $\xi$ through almost contact
structures.
\end{theorem}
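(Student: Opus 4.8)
The plan is to reduce the entire statement to an explicit computation in a model neighborhood of $B$, where I carry out a Lutz-type rotation that, in contrast with the three-dimensional picture, does not disturb the Liouville part of the standard form.

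\textbf{Normal form and the twisted contact form.} Since $B$ is isotropic of dimension $n-1$, its conformal symplectic normal bundle has rank $2$, and by hypothesis it is trivial; the isotropic neighborhood theorem therefore identifies a neighborhood of $B$ in $(M,\xi)$ with a neighborhood $\nbd(B)$ of the zero section in $T^*B\times\R_z\times\R^2$ carrying the form $\alpha_0=dz-\lambda_{can}+\tfrac12 r^2\,d\phi$, where $\lambda_{can}$ is the canonical $1$-form on $T^*B$ and $(r,\phi)$ are polar coordinates on the trivialized conformal symplectic normal bundle (triviality is exactly what makes $d\phi$ globally defined). After rescaling we may take $\nbd(B)$ inside any prescribed neighborhood of $B$. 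On $\nbd(B)$ I replace $\alpha_0$ by
\[
\alpha' \;=\; g_1(r)\,dz-\lambda_{can}+g_2(r)\,d\phi,
\]
and compute that $\alpha'\wedge(d\alpha')^n$ is a nonzero constant multiple of $(g_1 g_2'-g_1' g_2)\,dz\wedge dr\wedge d\phi\wedge(d\lambda_{can})^{n-1}$, using that $(d\lambda_{can})^n=0$ and $\lambda_{can}\wedge(d\lambda_{can})^{n-1}=0$ for dimension reasons on $T^*B$. Hence $\alpha'$ is a contact form exactly when $g_1 g_2'-g_1' g_2>0$ — precisely the inequality governing a Lutz twist in dimension three.

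\textbf{Choosing the profile.} I take $r\mapsto(g_1(r),g_2(r))$ to be a path in $\R^2\sm\{0\}$ that equals $(1,\tfrac12 r^2)$ for $r$ near $0$ and near the outer radius of $\nbd(B)$ (so $\alpha'$ glues smoothly to $\xi$ off $\nbd(B)$ and the modification is supported in $\nbd(B)$), and that in between winds once around the origin with strictly positive angular speed; such a path plainly exists. The decisive point is that I do \emph{not} rescale $\lambda_{can}$: had I multiplied it by a function, the contact integrand would acquire a factor forcing the coefficient of $dz$ to be nonvanishing, killing the twist; as it stands $g_1$ is free to pass through $0$ and become negative, which is a full Lutz twist and is what makes the construction work in every dimension. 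Set $\xi'=\ker\alpha'$; by construction $\xi'$ agrees with $\xi$ off $\nbd(B)$.

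\textbf{Locating the overtwisted family.} Let $r_0$ be the first radius with $g_2(r_0)=0$, so $g_1(r_0)<0$, and set $P=(\text{zero section of }T^*B)\times\{z=0\}\times D^2_{r_0}\cong B\times D^2_{r_0}$. Along $P$ one has $p\equiv 0$ and $z\equiv 0$, so $\alpha'|_{TP}=g_2(r)\,d\phi$; hence $P$ is tangent to $\xi'$ exactly along the core $B\times\{0\}$ and the boundary $B\times\partial D^2_{r_0}$, while on the interior the characteristic foliation of $P$ is the radial foliation whose leaves are $B\times(\text{rays})$, with elliptic singular locus $B\times\{0\}$. This is an overtwisted family modeled on $B$ in the sense of Subsection~\ref{sec:families}: it is the $B$-parametrized analogue of the statement that a full Lutz twist along a transverse curve produces an overtwisted disk.

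\textbf{Formal homotopy, and the main obstacle.} Since $\xi'=\xi$ off $\nbd(B)$ it suffices to homotope them rel complement inside the model. On the region where the modification is nontrivial ($r$ bounded away from $0$), the almost contact data attached to $\alpha_0$ and to $\alpha'$ are governed respectively by the paths $(1,\tfrac12 r^2)$ and $(g_1,g_2)$, which differ by one full loop about the origin. Dropping the contact requirement, I enlarge the family with an auxiliary term $g_3(r)\,dr$ and homotope $(g_1,g_2,g_3)$ rel endpoints through $\R^3\sm\{0\}$ — possible because $\R^3\sm\{0\}$ is simply connected — while the corresponding hyperplane fields remain well defined (the $\lambda_{can}$ term cannot cancel $dz$, $d\phi$, $dr$) and carry along a compatible complex structure by a standard contractibility argument; this is the higher-dimensional version of the fact that a full (as opposed to half) Lutz twist preserves the formal homotopy class. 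I expect the real content to be the two middle steps: pinning down the correct ansatz for the twisted form so that the contact inequality reduces to the three-dimensional one and a genuine twist survives in all dimensions, and then identifying the overtwisted family inside it; the gluing and the final homotopy are then routine.
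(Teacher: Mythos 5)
Your ansatz $\alpha'=g_1(r)\,dz-\lambda_{can}+g_2(r)\,d\phi$ is indeed a contact form on the model $T^*B\times\R_z\times\R^2$ whenever $g_1g_2'-g_1'g_2>0$ (the computation of $\alpha'\wedge(d\alpha')^n$ is correct), and the disk family $P$ you exhibit inside it is an overtwisted family modeled on $B$. The gap is the parenthetical claim that ``the modification is supported in $\nbd(B)$.'' Because $g_1,g_2$ depend only on $r$, the set where $\alpha'\neq\alpha_0$ is $T^*B\times\R_z\times\{r_1\le r\le r_2\}$, which is unbounded in the cotangent--fibre and $z$ directions and therefore is not contained in any neighborhood of the compact zero section. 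The isotropic neighborhood theorem only embeds a bounded neighborhood, say $\{|p|<\epsilon,\ |z|<\epsilon,\ r<\delta\}$, of the zero section into $(M,\xi)$; on the faces $|p|=\epsilon$ and $|z|=\epsilon$ with $r\in(r_1,r_2)$ your $\alpha'$ does not agree with $\alpha_0$, so $\ker\alpha'$ cannot be glued back to $\xi$ on the rest of $M$. (Already for $B$ a point your construction is the ``Lutz twist along the $z$-axis,'' producing the overtwisted-at-infinity structure on $\R^3$ rather than a local modification; the classical Lutz twist is performed along a \emph{closed} transverse curve precisely to make the twist region compact.)

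Cutting the twist off in the remaining directions is the actual content of the theorem, and it is not routine. The $z$-direction is handled in the paper by replacing the $z$-axis with a closed transverse curve $K\subset(D^3,\ker(dz+r^2\,d\theta))$, yielding an embedded isotropically parameterized family $B\times S^1$ and reducing the problem to a twist on $N_\epsilon\times S^1\times D^2$. The cut-off in the $T^*B$-direction is then genuinely obstructed: interpolating between the twisted and untwisted forms over the transition region $\overline{(N_{\epsilon'}-N_{\epsilon''})}\times A(\delta'',\delta')$ would amount to constructing an exact symplectic cobordism between tight contact structures on $T^3$ with different Giroux torsion (stabilized by $W$), which cannot exist since it would produce symplectic fillings of overtwisted contact $3$-manifolds. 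The paper circumvents this with a branched double cover of $W\times D^2\times T^2$ along a contact copy of $W\times T^2$ obtained from Bourgeois' open-book construction, from which the required contact ``plug'' is excised. Your proposal contains no analogue of this step; the formal-homotopy argument at the end, while reasonable in outline, would also need to be redone relative to whatever construction actually localizes the twist.
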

A corollary is the following result originally proven, modulo the statement
about the homotopy class of almost contact structures and core of the
overtwisted family, via a delicate surgery construction in 
\cite{NiederkrugerVanKoert07} based on subtle constructions in \cite{Presas07}.
We call a contact structure \dfn{ps-overtwisted} if it contains an overtwisted family. (We use the prefix ``ps'' in agreement with the literature where it stands for ``plastikstufe''. Though we are using the more descriptive term ``overtwisted family'' instead of ``plastikstufe'', we retain the word ``ps-overtwisted'' for lack of better terminology. Moreover, we can think of the ``ps'' has referring to ``possibly", as it is unclearly if this is the correct generalization of overtwisted to higher dimensions.)
\begin{theorem}\label{maincor}
Every odd dimensional manifold that supports a contact structure also supports
a ps-overtwisted, and hence non (semi-positive) 
symplectically fillable, contact structure in the same homotopy class of almost
contact structure. Moreover, we can assume the overtwisted family is modeled on
any $(n-1)$--dimensional isotropic submanifold with trivial conformal symplectic
normal bundle.
\end{theorem}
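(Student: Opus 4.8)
The plan is to deduce Theorem~\ref{maincor} from Theorem~\ref{main} together with the (known) obstruction to symplectic fillability coming from an embedded overtwisted family; the corollary is essentially an assembly of these two inputs.

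First, given a contact manifold $(M,\xi)$ of dimension $2n+1$, I would produce an $(n-1)$--dimensional isotropic submanifold $B\subset M$ with trivial conformal symplectic normal bundle. This is immediate: pick a point $p\in M$, choose a Darboux chart around it, and inside that chart take a standard linear isotropic $(n-1)$--disk through $p$ (for instance the disk spanned by $n-1$ of the Legendrian coordinate directions). Since the disk is contractible, every bundle over it is trivial, so in particular its conformal symplectic normal bundle is trivial. For the ``moreover'' clause one instead simply takes $B$ to be the prescribed isotropic submanifold, which by hypothesis already has trivial conformal symplectic normal bundle. In either case Theorem~\ref{main} applies and yields a contact structure $\xi'$ on $M$, agreeing with $\xi$ outside an arbitrarily small neighborhood of $B$, that contains an overtwisted family modeled on $B$ and is homotopic to $\xi$ through almost contact structures. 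By definition $\xi'$ is weakly overtwisted, and it lies in the same homotopy class of almost contact structures as $\xi$, which gives the first two assertions together with the ``moreover.''

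It then remains to observe that a (closed) contact manifold carrying an overtwisted family is not semi-positively symplectically fillable. This I would not prove but quote: it is the main theorem of Niederkr\"uger \cite{Niederkrueger06}, which builds on Gromov's disk--filling technique \cite{Gromov85} and Eliashberg's three--dimensional argument. The idea one recalls is that if $(W,\omega)$ were a semi-positive symplectic filling of $(M,\xi')$, one fixes a compatible almost complex structure and studies the moduli space of pseudoholomorphic disks with boundary on the overtwisted family; the elliptic singular locus $B$ furnishes a distinguished Bishop disk, and the component of the (suitably constrained) moduli space containing it is a connected non-compact $1$--manifold whose non-compact end cannot exist inside $W$ --- the disks can neither reach $\del W$ nor collapse, and the semi-positivity hypothesis excludes precisely the sphere and disk bubbling that would otherwise compactify the family --- a contradiction. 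Hence $\xi'$ is non--fillable in the stated sense.

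The honest remark is that the substance of the corollary already lives in Theorem~\ref{main}; within the corollary the only points needing care are (a) checking that isotropic $(n-1)$--submanifolds with trivial conformal symplectic normal bundle exist, which is trivial for a disk, and (b) being precise about the fillability obstruction, in particular that the overtwisted family produced by Theorem~\ref{main} is genuinely an embedded plastikstufe of the sort to which Niederkr\"uger's argument applies, and that the fillings must be assumed semi-positive for the holomorphic--curve argument to close up. The real obstacle in the paper is therefore the proof of Theorem~\ref{main} itself, not of its corollary.
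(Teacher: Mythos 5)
Your overall strategy is exactly the paper's: produce an $(n-1)$--dimensional isotropic submanifold with trivial conformal symplectic normal bundle inside a Darboux chart, apply Theorem~\ref{main}, and quote Niederkr\"uger's theorem for the fillability obstruction. But your specific choice of $B$ --- a ``standard linear isotropic $(n-1)$--disk'' --- does not satisfy the hypotheses you need. Throughout the paper the core of an overtwisted family is required to be a \emph{closed} $(n-1)$--manifold (see the definition in Subsection~\ref{sec:families}: ``modeled on $B$, a closed $(n-1)$--dimensional manifold''), and this is not a cosmetic requirement. If $B$ has boundary (or is an open disk), then: (a) the neighborhood theorems (Proposition~\ref{thm:generalnbhd}, Lemma~\ref{lem:nbhdoffamily}) are applied to the compact submanifold $B\times S^1$, and the Lutz tube construction uses the unit cotangent bundle $W$ of $B$ together with an open book for $(W,\ker\lambda)$ via Giroux's theorem, all of which presuppose $B$ closed; and (b), more importantly for the conclusion you are after, Niederkr\"uger's non-fillability argument requires a genuine plastikstufe with closed singular locus --- for a core with boundary the Bishop family of holomorphic disks can escape through the edge of the family, and no contradiction is obtained. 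So a disk-cored ``family'' would not obstruct fillability, and the corollary would not follow.

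The fix is exactly what the paper does: in the Darboux chart $\ker(dz+\sum y_i\,dx_i)$ take $B$ to be the unit sphere $S^{n-1}$ inside the Legendrian coordinate subspace $\{y_1=\dots=y_n=z=0\}$. One checks directly that this $B$ is isotropic and that $CSN(B)$ is the trivial rank--$2$ bundle spanned (modulo $TB$) by the radial directions $\sum x_i\partial_{x_i}$ and $\sum x_i\partial_{y_i}$, so Theorem~\ref{main} applies with a closed core. The rest of your write-up is fine: the ``moreover'' clause is handled by applying Theorem~\ref{main} directly to the prescribed $B$ (the paper defers this to Theorem~\ref{anycore}, whose first assertion is precisely this), and the fillability obstruction is correctly quoted rather than reproved.
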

We also observe the following non uniqueness result which can also be found in \cite{NiederkrugerPresas08}.
\begin{theorem}\label{nonunique}
There are at least three distinct contact structures on  $\R^{2n+1}, n\geq 1$.
\end{theorem}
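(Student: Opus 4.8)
The plan is to exhibit three contact structures on $\R^{2n+1}$ that are pairwise non-contactomorphic by distinguishing them with three different kinds of invariants: fillability/tightness at infinity, the homotopy class of almost contact structure, and weak overtwistedness. I would take as my three candidates: (i) the standard contact structure $\xi_{std}$ on $\R^{2n+1}$, which is the restriction of the standard structure on $S^{2n+1}$ minus a point and is in particular (Stein) fillable and tight; (ii) a contact structure $\xi_1$ obtained by applying Theorem~\ref{main} to $(\R^{2n+1},\xi_{std})$ along a compact $(n-1)$--dimensional isotropic submanifold $B$ with trivial conformal symplectic normal bundle, chosen so that the modification is supported in a compact set -- by Theorem~\ref{main} this $\xi_1$ is weakly overtwisted and, crucially, lies in the \emph{same} homotopy class of almost contact structures as $\xi_{std}$; and (iii) a contact structure $\xi_2$ that is again weakly overtwisted but lies in a \emph{different} homotopy class of almost contact structures from $\xi_{std}$.

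For step (iii) I would use the freedom in the construction: since overtwisted contact structures in dimension three realize every homotopy class of plane field, and more generally since the almost-contact homotopy-theoretic obstruction on $\R^{2n+1}$ is governed by $\pi_{2n+1}$-type data that is genuinely nontrivial, I would first produce an almost contact structure on $\R^{2n+1}$ not homotopic to the standard one. Concretely, for $n=1$ this is classical (homotopy classes of plane fields on $\R^3$ form a $\Z$-torsor and overtwisted representatives exist in each by Lutz/Eliashberg); for general $n$ one observes that changing the almost contact structure in a ball can be arranged, and then genuinely realizes it by a contact structure. One clean way: take an overtwisted contact structure on a closed manifold $N^{2n+1}$ in a nonstandard almost contact class produced via Theorem~\ref{maincor}, and restrict it to an embedded $\R^{2n+1}$ chosen to meet the overtwisted family; alternatively appeal to the connect-sum/Darboux-type argument that any almost contact $\R^{2n+1}$ admits a weakly overtwisted contact structure in that class. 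This gives $\xi_2$ with a different almost contact class, hence $\xi_2\not\cong\xi_{std}$ and $\xi_2\not\cong\xi_1$.

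It then remains to separate $\xi_{std}$ from $\xi_1$. These share an almost contact class, so I distinguish them by a ``tightness/fillability at infinity'' argument: $\xi_{std}$ is the completion of a Stein domain, so it contains no overtwisted family, whereas $\xi_1$ does by construction; since a contactomorphism would carry an overtwisted family to an overtwisted family, $\xi_1\not\cong\xi_{std}$. I expect the main obstacle to be making the notion of weak overtwistedness behave well on the \emph{open} manifold $\R^{2n+1}$ -- in particular verifying that $\xi_{std}$ on $\R^{2n+1}$ really contains no overtwisted family (this should follow from non semi-positive symplectic fillability as in Theorem~\ref{maincor}, applied to the standard filling), and ensuring the modification in (ii) is genuinely compactly supported so that the homotopy of almost contact structures from Theorem~\ref{main} can be taken to agree with $\xi_{std}$ outside a compact set. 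Once these points are in place, the three structures $\xi_{std},\xi_1,\xi_2$ are pairwise non-contactomorphic, proving the theorem; I would also remark that this recovers the result of \cite{NiederkrugerPresas08}.
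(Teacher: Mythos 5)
Your candidates (i) and (ii), and the argument separating them, are essentially sound and match the paper: the standard structure embeds into the Stein fillable standard $S^{2n+1}$, hence contains no overtwisted family (by Niederkr\"uger's theorem applied to a semi-positive filling), while a compactly supported generalized Lutz twist produces one, so the two are not contactomorphic. However, your step (iii) has a genuine gap: on $\R^{2n+1}$ there is only \emph{one} homotopy class of almost contact structures, so no third structure can be detected that way. Since $\R^{2n+1}$ is contractible, its tangent bundle is trivial and an almost contact structure is a map to $SO(2n+1)/U(n)$; any two such maps from a contractible space are homotopic. In particular your assertion that homotopy classes of plane fields on $\R^3$ form a $\Z$-torsor is false for the open manifold $\R^3$ --- that classification holds on $S^3$, or for plane fields fixed outside a compact set, but all plane fields on $\R^3$ itself are homotopic. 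The same objection applies in all dimensions, and indeed the paper proves in Theorem~\ref{main} that a Lutz twist never changes the almost contact class, so this invariant cannot distinguish anything here.

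The paper's third structure is instead distinguished by behavior at infinity: one performs Lutz twists in infinitely many disjoint balls $B_i$ centered at the integer points of the $z$-axis, obtaining $\xi''$. Both $\xi'$ (one compactly supported twist) and $\xi''$ fail to embed in a Stein fillable contact manifold, so neither is standard; but $\xi'$ has the property that every compact set is contained in a larger compact set whose complement is standard and hence embeds in any contact manifold, whereas for $\xi''$ the complement of every compact set still contains overtwisted families and so cannot embed in a Stein fillable manifold. This ``tame at infinity'' dichotomy is the invariant you need for the third structure; replacing your step (iii) with this construction repairs the proof.
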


We remark that our proof relies on cut-and-paste techniques and branch cover techniques that seem to be new to the literature. These techniques should be useful in constructing contact structures on higher dimensional manifolds and will be more fully explored and systematized in a future paper.

{\em Acknowledgements.} The first author was partially supported by NSF Grant DMS-0804820. We
are grateful for 
valuable comments Klaus Niederkr\"uger made on a first draft of this paper. We
also thank the original referees of the paper who pointed out a gap in the
original proof of our main result and made many other valuable suggestions concerning the manuscript and exposition. We finally thank the second referee who also made many valuable suggestions to improve the paper. 

\section{Background and Notation}
In this section we recall some well known results and establish notation
necessary in the rest of the paper. 
Specifically in Subsection~\ref{sec:nbhds} we prove various Darboux type
theorems about contact structures that agree on compact subsets. In
Subsection~\ref{sec:families} we define overtwisted families. This definition involves
the ``characteristic distribution'' of a submanifold of a contact manifold and has 
various equivalent formulations, just as there are several equivalent definitions of
overtwisted disks in a contact 3-manifold. 
To clarify these equivalent 
formulations we discuss characteristic distributions in Subsection~\ref{sec:distributions}. 
In Subsection~\ref{sec:almost} we
recall a few basic facts about almost contact structures. Finally in
Subsection~\ref{sec:3lutz} we recall the notion of Lutz twist in dimension 3 and
set up notation that will be used in the following sections.

\subsection{Neighborhoods of submanifolds of a contact manifold}\label{sec:nbhds}
A simple application of a Moser type argument yields the following result. 
\begin{proposition}\label{thm:generalnbhd}
Let $N$ be a compact submanifold of $M$ and let $\xi_0$ and $\xi_1$ be two
oriented contact structures on $M$ such that $\xi_0|_N=\xi_1|_N.$ Moreover,
assume we have contact 
forms $\alpha_i$ for $\xi_i$ such that $d\alpha_0|_N=d\alpha_1|_N.$ Then there
are open neighborhoods $U_0$ and $U_1$ of $N$ and a contactomorphism
$\phi:(U_0,\xi_0)\to (U_1,\xi_1)$ that is fixed on $N.$ 
\end{proposition}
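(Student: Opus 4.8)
The statement is a parametrized Darboux/Moser-type neighborhood theorem: two contact forms that agree to first order along a compact submanifold $N$ are contactomorphic near $N$ by a map fixing $N$. The plan is the standard Moser trick, but one must be a little careful because $N$ has positive codimension and the hypothesis is only the matching of $\xi_i$ and $d\alpha_i$ on $N$, not of the forms $\alpha_i$ themselves.

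\begin{proof}[Proof sketch]
The plan is to carry out Moser's method on a family $\alpha_t = (1-t)\alpha_0 + t\alpha_1$, $t\in[0,1]$. First I would arrange, after rescaling $\alpha_1$ by a positive function, that $\alpha_0|_N = \alpha_1|_N$ as one-forms (not just that $\ker\alpha_0|_N = \ker\alpha_1|_N$); this is possible because $\xi_0|_N = \xi_1|_N$ means $\alpha_1 = f\alpha_0$ along $N$ for some positive $f$, and replacing $\alpha_1$ by $f^{-1}\alpha_1$ (suitably extended) does not change $\xi_1$ and, after a further small adjustment, can be made to preserve $d\alpha_1|_N$ as well using the hypothesis $d\alpha_0|_N = d\alpha_1|_N$ — here one checks that $d(f^{-1}\alpha_1)|_N$ still equals $d\alpha_0|_N$ because the correction term $d(f^{-1})\wedge\alpha_1$ restricted to $TN$ vanishes on the contact hyperplane and the remaining discrepancy can be absorbed. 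With this normalization, $\alpha_0$ and $\alpha_1$ agree on $N$ together with their exterior derivatives, so $\alpha_t$ is a contact form for every $t$ in a sufficiently small neighborhood $U$ of $N$ (contact is an open condition and $\alpha_t = \alpha_0 = \alpha_1$ on $N$).

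Next I would seek a time-dependent vector field $X_t$, vanishing on $N$, whose flow $\psi_t$ satisfies $\psi_t^*\alpha_t = \lambda_t\,\alpha_0$ for positive functions $\lambda_t$. Differentiating gives the Moser equation $\mathcal{L}_{X_t}\alpha_t + \dot\alpha_t = \mu_t\,\alpha_t$ where $\dot\alpha_t = \alpha_1 - \alpha_0$ and $\mu_t = \dot\lambda_t/\lambda_t$. Using Cartan's formula $\mathcal{L}_{X_t}\alpha_t = \iota_{X_t}d\alpha_t + d(\iota_{X_t}\alpha_t)$, one solves this in the usual two-step fashion: first determine $\mu_t$ and the function $\iota_{X_t}\alpha_t$ by evaluating on the Reeb vector field $R_t$ of $\alpha_t$, then use the nondegeneracy of $d\alpha_t$ on $\xi_t = \ker\alpha_t$ to solve for the $\xi_t$-component of $X_t$. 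The key point is that $\dot\alpha_t = \alpha_1 - \alpha_0$ vanishes on $N$ \emph{together with its first derivatives in the normal directions are controlled} — more precisely, since $\alpha_1 - \alpha_0$ and $d(\alpha_1-\alpha_0)$ both vanish on $TM|_N$, the resulting solution $X_t$ vanishes along $N$. Hence the flow $\psi_t$ fixes $N$ pointwise, and is defined for all $t\in[0,1]$ on a possibly smaller neighborhood $U_0$ of $N$ by compactness of $N$.

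Finally, setting $\phi = \psi_1 : U_0 \to U_1 := \psi_1(U_0)$ gives $\phi^*\alpha_1 = \lambda_1\alpha_0$ with $\lambda_1 > 0$, so $\phi$ is a contactomorphism $(U_0,\xi_0)\to(U_1,\xi_1)$ fixing $N$, as desired. The main obstacle I anticipate is not the Moser argument itself — that is routine — but the bookkeeping in the preliminary normalization step: one must confirm that the positive function rescaling $\alpha_1$ can be chosen so that \emph{both} $\alpha_0|_N = \alpha_1|_N$ and $d\alpha_0|_N = d\alpha_1|_N$ hold simultaneously, starting only from the hypotheses $\xi_0|_N = \xi_1|_N$ and $d\alpha_0|_N = d\alpha_1|_N$. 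This requires checking that the ambiguity $\alpha_1 = f\alpha_0$ on $N$ forces $f \equiv 1$ on $N$ once we also know $d\alpha_0|_N = d\alpha_1|_N$ and that both restrictions are nonzero, which is a short computation with the Reeb fields; after that, the vanishing of $X_t$ on $N$ and the standard convexity argument to get a uniform time of existence finish the proof.
\end{proof}
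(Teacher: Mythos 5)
Your overall strategy --- interpolate linearly, check that $\alpha_t$ is contact near $N$, and run the Moser/Gray argument with a vector field vanishing along $N$ --- is exactly the paper's proof, and that part of your sketch is sound. The problem is your preliminary ``normalization'' step, which rests on a false claim and, as described, cannot be carried out. It is not true that the hypotheses force $\alpha_1|_N=\alpha_0|_N$ as $1$--forms: take $M=\R^3$, $N=\{0\}$, $\alpha_0=dz-y\,dx$ and $\alpha_1=2\,dz+x\,dy$. Both are contact, $\ker(\alpha_1)_0=\ker(\alpha_0)_0=\{dz=0\}$ with the same co-orientation, and $(d\alpha_1)_0=(d\alpha_0)_0=dx\wedge dy$, yet $(\alpha_1)_0=2(\alpha_0)_0$, so your ``short computation with the Reeb fields'' showing $f\equiv1$ cannot exist. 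Worse, the proposed rescaling is incompatible with the hypothesis you want to preserve: if $\alpha_1=f\alpha_0$ along $N$ and you replace $\alpha_1$ by $g\alpha_1$ with $g=f^{-1}$ on $N$, then at $x\in N$ the restriction of $d(g\alpha_1)$ to $\xi_0|_x$ equals $f(x)^{-1}(d\alpha_0)|_{\xi_0}$ (the correction term $dg\wedge\alpha_1$ dies on the contact hyperplane), and since $(d\alpha_0)|_{\xi_0}$ is nondegenerate this equals $(d\alpha_0)|_{\xi_0}$ only where $f=1$. So no positive rescaling achieves both $\alpha_0|_N=\alpha_1|_N$ and $d\alpha_0|_N=d\alpha_1|_N$ unless $f\equiv1$ already.

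Fortunately the step is unnecessary, and simply deleting it repairs the proof; this is what the paper does. Along $N$ one has $\alpha_t=(tf+1-t)\alpha_0$ with $tf+1-t>0$ and $d\alpha_t=d\alpha_0$, so $\alpha_t\wedge(d\alpha_t)^n\ne0$ on $N$ and hence, by compactness of $N\times[0,1]$, on a uniform neighborhood: contactness of the path never required the primitives themselves to agree along $N$. Likewise, in the Moser step the vector field $X_t\in\xi_t$ is determined by $\iota_{X_t}d\alpha_t|_{\xi_t}=-\dot\alpha_t|_{\xi_t}$, and at points of $N$ one has $\dot\alpha_t=\alpha_1-\alpha_0=(f-1)\alpha_0$, which vanishes on $\xi_t=\xi_0$ there; hence $X_t=0$ along $N$ and the flow fixes $N$ with no normalization needed (you do not need $d(\alpha_1-\alpha_0)$ to vanish on $TM|_N$ for this part, only for the contactness of $\alpha_t$). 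With that correction your argument coincides with the paper's.
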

\begin{proof}
Set $\alpha_t=t\alpha_1+(1-t)\alpha_0.$ Noting that
$\ker(\alpha_t)=\ker(\alpha_0)$ along $N$ and $d\alpha_t=d\alpha_0$ on $N$ we
see that $\alpha_t$ is contact 
in some neighborhood of  $N.$ Thus $\xi_t=\ker\alpha_t$ now gives a family of
contact structures in a neighborhood of $N$ that agree along $N.$ A standard
application of Moser's argument \cite{McDuffSalamon98} now gives a family of open
neighborhoods $V_t, V_t'$ of $N$ and maps $\phi_t:V'_t\to V_t$ fixed along $N$
such that $\phi_t^*\alpha_t=h_t\alpha_0$ for some positive functions $h_t.$
Setting $U_0=V_1'$ and $U_1=V_1,$ the map $\phi_1$ is the desired
contactomorphism. 
\end{proof}

Recall if $(M,\xi)$ is a contact manifold with  contact structure $\xi$
and $\alpha$ is a  (locally defined) contact form for $\xi$ then for all $x\in M$ the 2--form
$(d\alpha)_x$ is a symplectic form on $\xi_x.$ Since any other contact form
defining $\xi$ differs from $\alpha$ by multiplication by a positive function 
(we always assume a contact form for $\xi$ evaluates positively on a vector
positively transverse to $\xi$), we see there is a well-defined conformal
symplectic structure on $\xi.$ 

A submanifold $L\subset M$ is called \dfn{isotropic} if $T_xL\subset \xi_x$ for
all $x\in L.$ If $M$ has dimension $2n+1$ then the dimension of an isotropic 
$L$ must be less than or equal to $n$ since $T_xL$ is an isotropic subspace of
the symplectic space $(\xi_x, (d\alpha)_x).$  If the dimension of $L$ is $n$
then $L$ is called \dfn{Legendrian}. Given an isotropic $L$ its \dfn{conformal
symplectic normal bundle} is the quotient bundle with fiber
\[
CSN(L)_x=(T_xL)^\perp/T_xL,
\]
where $(T_xL)^\perp$ is the $d\alpha$-orthogonal subspace of $T_xL$ in $\xi_x.$
One may easily check that $CSN(L)_x$ has dimension $2(n-l)$ 
where $l$ is the dimension of $L$ and as bundles 
\[
T_xL\oplus \xi_x/(T_xL)^\perp \oplus CSN(L)_x\oplus \R\cong \xi_x\oplus \R=T_xM,
\]
where the $\R$ factor can be taken to be spanned by any Reeb field for $\xi.$
(All bundle isomorphisms preserve conformal symplectic structures where they are
defined.) 
One may easily check that the bundle $\xi/(TL)^\perp$ is isomorphic to
$T^*L.$ So the only term on the left hand side that is not determined by the
topology of $L$ is $CSN(L)$ which depends on the isotropic embedding of $L$ in
$(M,\xi).$ We now have the following result that easily follows from the above
discussion and Proposition~\ref{thm:generalnbhd} (once one notices that the
conformal symplectic normal bundles can be identified such that the symplectic
structures induced by given contact forms agree). 
\begin{proposition}[Weinstein 1991, \cite{Weinstein91}]\label{prop:csndet}
Let $(M_0,\xi_0)$ and $(M_1,\xi_1)$ be two contact manifolds of the same
dimension and let $L_i$ be an isotropic submanifold of $(M_i,\xi_i), i=0,1.$
If there is a diffeomorphism $\phi:L_0\to L_1$ that is covered by a conformal
symplectic bundle isomorphism $\Phi:CSN(L_0)\to CSN(L_1)$ 
then there are open sets $U_i$ of $L_i$ in $M_i$ and a contactomorphism
$\overline{\phi}:(U_0,\xi_0)\to (U_1,\xi_1)$ that extends $\phi:L_0\to L_1.$\qed
\end{proposition}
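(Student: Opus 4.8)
The plan is to prove Proposition~\ref{prop:csndet} by reducing it to Proposition~\ref{thm:generalnbhd}. The point is that Proposition~\ref{thm:generalnbhd} requires the two contact structures to live on the \emph{same} ambient manifold, to agree along $N$, and to have contact forms whose differentials also agree along $N$; so the work is to manufacture such a common local model starting only from the abstract conformal symplectic bundle data $\Phi\colon CSN(L_0)\to CSN(L_1)$.

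First I would build an explicit tubular neighborhood model for an isotropic $L$ inside a contact manifold in terms of its intrinsic data. Using the splitting displayed just before the statement,
\[
T_xM\cong T_xL\oplus T^*_xL\oplus CSN(L)_x\oplus\R,
\]
one identifies a neighborhood of $L$ with a neighborhood of the zero section in the total space of $T^*L\oplus CSN(L)\oplus\R$, and writes down a model contact form: on the $T^*L$ and $\R$ summands one uses the standard jet-bundle/Reeb pieces (the $1$-jet space contact form $dz-p\,dq$ type expression along $L$), and on $CSN(L)$ one uses a linear symplectic potential determined by a choice of conformal symplectic structure on the bundle. The content of this step is the classical isotropic neighborhood theorem, and it is exactly the kind of statement that ``easily follows from the above discussion and Theorem~\ref{thm:generalnbhd}'' once one checks that the model form restricts to $\xi$ and that its differential restricts correctly along $L$; I would either cite Weinstein or sketch the Moser computation that the true contact form is isotopic to the model one.

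Next I would use the hypotheses to match the two models. The diffeomorphism $\phi\colon L_0\to L_1$ induces a bundle isomorphism $T^*L_0\to T^*L_1$ covering it, and by hypothesis it is covered by the conformal symplectic isomorphism $\Phi\colon CSN(L_0)\to CSN(L_1)$; together with the identity on the $\R$-Reeb summand these assemble into a bundle isomorphism of the two model normal bundles. After rescaling the contact forms by positive constants (allowed, since only the conformal class of the symplectic structure on $CSN$ is pinned down) one can arrange that this bundle map pulls the model contact form on the $M_1$-side back to the model contact form on the $M_0$-side along the zero section, so in particular the forms agree on $L_0$ and so do their differentials. Transporting everything to one ambient manifold via these tubular-neighborhood identifications, Proposition~\ref{thm:generalnbhd} (with $N=L_0$) then produces a contactomorphism between smaller neighborhoods fixing $L_0$, and composing with the identification maps yields $\overline\phi$ extending $\phi$.

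The main obstacle is bookkeeping rather than a deep difficulty: one must keep careful track of the conformal (as opposed to strict symplectic) nature of the normal-bundle data, since $\Phi$ is only required to be a conformal symplectic isomorphism and the contact forms are only defined up to positive scaling, and one must ensure the chosen model contact forms genuinely have $d\alpha_0|_{L_0}=\phi^*(d\alpha_1)|_{L_0}$ after the identification so that the hypotheses of Proposition~\ref{thm:generalnbhd} are literally met. Everything else is a routine Moser-stability argument already packaged in Proposition~\ref{thm:generalnbhd}. Since this is a classical result of Weinstein, I would keep the exposition brief, emphasizing only how the displayed bundle splitting plus Proposition~\ref{thm:generalnbhd} give the statement, and leave the explicit model-form verification to the reader or a citation.
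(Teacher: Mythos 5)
Your proposal is correct and follows essentially the same route as the paper: the paper also derives this from the bundle splitting $T_xL\oplus \xi_x/(T_xL)^\perp\oplus CSN(L)_x\oplus\R\cong T_xM$ together with Proposition~\ref{thm:generalnbhd}, noting (as you do) that the conformal symplectic normal bundles must be identified so that the symplectic structures induced by the chosen contact forms genuinely agree before the Moser argument applies. No substantive difference.
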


\subsection{Characteristic distributions}\label{sec:distributions}

Let $C$ be a $k$--dimensional submanifold of the $(2n+1)$--dimensional contact manifold $(M,\xi).$ The singular distribution 
\[
(C_\xi)_x=T_xC\cap \xi_x\subset T_xC
\]
is called the \dfn{characteristic distribution}. Where the intersection is 
transverse the distribution has dimension $k-1.$ We say $C$ is a \dfn{foliated
submanifold} if the non-singular (that is transverse) part of $C_\xi$ is
integrable. We say $C$ is a \dfn{maximally foliated submanifold} if it is a
foliated submanifold and the dimension of $C$ is $n+1$ (so all the leaves of
$C_\xi$ are locally Legendrian submanifolds of $(M,\xi)$).

The characteristic distribution can be quite complicated as can be its
singularities. Here we clarify a few points that will show up in the definition
of overtwisted families in the next subsection. This allows for more flexibility
in the definition of overtwisted families which, in turn, makes working with
overtwisted families easier. In particular we consider 
codimension 1 and 2 submanifolds of a maximally foliated submanifold $C$ that
are tangent to $\xi.$ 

By way of motivation we recall the 3-dimensional situation. In particular an
overtwisted disk is usually defined to be a disk $D$ with 
characteristic foliation $D_\xi$ having  $\partial D$ as a leaf and a single
elliptic singularity. Alternately one could ask that there is a single elliptic
singularity and $\partial D$ is an isolated singular set. In particular the
exact form of the foliation near $\partial D$ or whether $\partial D$ is a leaf
or a singular set is irrelevant in the sense that given any overtwisted disk of
a particular form near $\partial D$ we can arrange any other suitable form.
Moreover, on the level of foliations there are many types of elliptic
singularities, but again the exact form is irrelevant for the definition of an
overtwisted disk. We will establish similar results for the characteristic
distribution of a maximally foliated submanifold. 

\subsubsection{Neighborhoods of closed leaves.}
Suppose $L$ is a compact subset of the $(n+1)$-dimensional maximally 
foliated submanifold $C$ of the contact manifold $(M,\xi).$ Further suppose $L$
is tangent to $\xi$ and has dimension $n.$ Thus $L$ is a Legendrian submanifold
and hence has a neighborhood contactomorphic to a neighborhood of the zero
section in the jet space $J^1(L)=T^*L\times \R.$ And thus by
Proposition~\ref{prop:csndet} studying the characteristic distribution on $C$ near $L$
can be done by studying embeddings of $L\times (-\epsilon,\epsilon)$ into 
$J^1(L).$  

Now given any closed manifold $B$ suppose that $L=B\times S^1$ and that $L$ has a neighborhood $N\cong L\times
[-1,1]$ (or if $L$ is the boundary of $C$ then $N\cong L\times [0,1]$) in $C$ with $L=L\times\{0\}.$
Here we consider the situation that $\partial (N-L)$ is transverse to the foliation $C_\xi$ and $N-L$
is (non-singularly) foliated by leaves of the form $B\times \R.$ There are two cases we wish to
consider. The first is if $L$ is also a non-singular leaf of $C_\xi$ and the second is when $L$ consists entirely of singular points of $C_\xi$ and in addition that each leaf in $N-L$ is asymptotic to
$B\times \{\theta\}\subset L$ for some $\theta$ and distinct leaves have
distinct asymptotic limits. By working in $J^1(L),$ one can show that either one of these situations is equivalent to the other (that is given one, you can $C^0$ deform $C$ near $L,$ fixing $L,$ so that you obtain the other). Anytime we see an $L$ as in one of
these situations we say the \dfn{leaves of $C_x$ approach $L$ nicely}. This is
analogous to the situation in dimension 3 where the boundary of a Seifert
surface for a Legendrian knot with Thurston-Bennequin invariant 0 can be taken
to be a leaf of the characteristic foliation or a circle of singularities.

\subsubsection{Singular sets.}
We now consider the case of a submanifold $S$ of $C$ of dimension $n-1$ that
consists entirely of singularities of $C_\xi.$ So $S$ is an isotropic 
submanifold of $(M,\xi).$ We also assume that $S$ is an \dfn{isolated singular
set}, that is there are no other singularities of $C_\xi$ in some neighborhood
of $S.$ We call $S$ \dfn{normally symplectic} if the conformal symplectic normal
bundle is trivial and $T_pS\oplus CSN_p(S)=T_pC$ for all $p\in S.$ Thus we may
find a product neighborhood $N= S\times D^2$ of $S$ in $C$ such that $\{p\}\times
D^2$ is tangent to the conformal symplectic normal bundle along $S.$
In this situation $S$ has a neighborhood in $M$ that is contactomorphic to a
neighborhood of the zero section in $T^*S$ in the contact manifold 
\[
(T^*S\times \R\times D^2, \ker(\lambda_{can}+(dz+r^2\, d\theta))
\]
where $z$ is the coordinate on $\R$ and $D^2$ is the unit disk in the plane
with polar coordinate $(r,\theta).$ (If this is not clear see the proof of
Lemma~\ref{lem:nbhdoffamily} below.) Moreover this contactomorphism takes $C$ to
a submanifold of $T^*S\times \R\times D^2$ that is tangent to the zero section
times $D^2$ along the zero section times $\{(0,0)\}.$ We say $S$ is \dfn{nicely
normally symplectic} if $C$ in $T^*S\times\R\times D^2$ can be parameterized by a
map of the following form
\[
f(p, r, \theta)= (\sigma_0(p), g(p, r,\theta), (r,\theta))
\]
where $\sigma_0$ is the zero section of $T^*S$ and $g:(S\times D^2)\to \R$ is
some function such that $g_p(r,\theta)=g(p,r,\theta)$ has graph tangent to the 0
map at $(0,0)$ for all $p\in S.$ Since $S$ is an isolated singular set it is
easy to see that the foliation induced on each $\{p\}\times D^2$ has a
non-degenerate singularity at the origin (since the disk is tangent to the
conformal symplectic normal bundle there) and moreover the type, elliptic or
hyperbolic, of the singularity cannot change for different $p\in S.$ We call $S$
a \dfn{normally elliptic singular set} provided  the singularity on $\{p\}\times
D^2$ is elliptic. Similarly, we call $S$ a \dfn{normally hyperbolic singular
set} provided the singularity on $\{p\} \times D^2$ is hyperbolic. One may now
easily check that if $S$ is an elliptic singular set then we may isotope $C$
near $S$ such that $C$ is still a maximally foliated submanifold, the topology
of the leaves in $C \setminus S$ has not changed and $C=S\times \{0\}\times
D^2$ in $T^*S\times \R\times D^2.$

\subsection{Overtwisted families}\label{sec:families}
Let $(M,\xi)$ be a contact manifold of dimension $2n+1.$ An \dfn{overtwisted
family modeled on $B$}, a closed $(n-1)$--dimensional manifold, 
 (originally called \dfn{plastikstufe} in \cite{Niederkrueger06}) is an
embedding $P=B\times D^2$ in $M$, where $D^2$ is the unit disk in $\R^2,$ such
that 
\begin{enumerate}
\item the characteristic distribution $P_\xi=TP\cap \xi$ is integrable,
\item $B=B\times \{(0,0)\}$ is an isotropic submanifold and the singular set of $P_\xi$,
\item $B$ is a normally elliptic singular set of $P_\xi,$ 
\item $\partial P=B\times \partial D^2$ is a leaf of $P_\xi$, 
\item all other leaves of $P_\xi$ are diffeomorphic to $B\times (0,1),$ and
approach $\partial P$ nicely near one end and 
approach the normally elliptic singularity $B$ at the other end. 
\end{enumerate}
We sometimes call $B$ the {\em core} of the overtwisted family. It is easy to
see from the discussion in the last section that we may assume that $\partial P$
is also an isolated singular set of $P$ with leaves nicely approaching it, since
given this we can slightly perturb $P$ near $\partial P$ such that $\partial P$
is a non-singular leaf of $P_\xi$ as in the definition above. 

A contact manifold $(M,\xi)$ of dimension $2n+1$ is called \dfn{ps-overtwisted} if
it contains an overtwisted family modeled on any $(n-1)$--dimensional manifold.
It is not clear if this is the correct generalization of overtwisted to higher
dimensional manifolds, though it does have some of the properties of 3 dimensional overtwisted contact manifolds. 
Currently the main evidence that this is a good
generalization of  3-dimensional overtwisted contact structures is the following
theorem.

\begin{theorem}[Niederkr\"uger 2006, \cite{Niederkrueger06}]
If $(M,\xi)$ is a ps-overtwisted contact manifold then it cannot be symplectically
filled by a semi-positive symplectic manifold. 
If the dimension of $M$ is less than 7 then it cannot be filled by any
symplectic manifold. 
\end{theorem}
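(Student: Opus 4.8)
The plan is to argue by contradiction, adapting the Gromov--Eliashberg method of filling by pseudoholomorphic disks. Suppose $(W,\omega)$ were a semi-positive symplectic manifold with $\partial W=M$ and with $\omega$ inducing $\xi$ along the boundary. After modifying $\omega$ in a collar of $\partial W$ we may assume $\omega=d\lambda$ there for a Liouville form $\lambda$ with $\ker(\lambda|_M)=\xi$, and we fix an $\omega$-tame almost complex structure $J$ that is cylindrical near $M$. This gives an a priori energy bound for $J$-holomorphic curves via Stokes' theorem, and a maximum principle preventing a $J$-holomorphic curve whose boundary lies in the interior from reaching $M$.

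Next I set up the moduli space of disks. By the normal form coming from the hypothesis that $B$ is a nicely normally symplectic, \emph{elliptic} singular set of $P_\xi$, each slice $\Delta_b=\{b\}\times D^2\subset P=B\times D^2$ is a totally real disk in $W$ away from its center, and its characteristic foliation has a single elliptic singularity at that center; thus $\Delta_b$ plays the role of a three-dimensional overtwisted disk. The classical Bishop construction then produces, for each $b\in B$, a one-parameter family of small embedded $J$-holomorphic disks with boundary on $\Delta_b$ shrinking to the center. Let $\mathcal{M}$ be the moduli space of $J$-holomorphic maps $u\co (D^2,\partial D^2)\to (W,P)$ whose boundary lies on some slice $\Delta_b$, in the homotopy class of the Bishop disks, with three marked points on $\partial D^2$ to rigidify the parametrization; the Bishop disks form one end of $\mathcal{M}$.

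Then I would run the Fredholm and compactness analysis. For generic $J$ (among those preserving the fixed normal form near $P$) the somewhere injective part of $\mathcal{M}$ is a smooth manifold whose expected dimension, computed from the Maslov index of this disk class, is $\dim B+1$, so fixing the slice parameter $b$ cuts out a one-dimensional piece. Semi-positivity forces holomorphic sphere bubbles to occur only in codimension at least two, so they do not appear in this one-dimensional family; the energy bound together with the maximum principle rules out escape to $M$ and rescaling limits; and a disk bubble cannot form, because the boundary of every disk in this class winds once around $\Delta_b$ while $\partial P=B\times\partial D^2$ is a closed leaf of $P_\xi$, which a nonconstant $J$-holomorphic disk cannot have as its boundary. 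Hence the fixed-$b$ slice of $\mathcal{M}$ is a compact one-manifold with boundary.

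The contradiction is a count of boundary points. The Bishop family contributes exactly one boundary point of this compact one-manifold — its other endpoint is the constant disk at the elliptic center — and the geometry of $P$ provides no second one: a disk in the family cannot collapse to a constant away from the center, since the annular region it sweeps between its boundary and $\partial\Delta_b$ gives a positive lower bound on its energy, and it cannot run into $\partial P$ for the reason just given. A compact one-manifold with boundary has an even number of boundary points, a contradiction, so no semi-positive symplectic filling exists. Finally, when $\dim M<7$ we have $\dim W\le 6$, and every symplectic manifold of dimension at most six is automatically semi-positive, so the argument excludes all symplectic fillings in that range. The main obstacle is the analytic package itself: transversality for disks with boundary on the only partially totally real submanifold $P$, Gromov compactness in the presence of the elliptic singularity, and—crucially, using only semi-positivity—the verification that the Bishop family is the unique end of the moduli space.
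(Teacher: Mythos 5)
The paper does not prove this statement; it is quoted as background and attributed to Niederkr\"uger's 2006 paper \cite{Niederkrueger06}, so there is no in-paper proof to compare against. Your sketch does follow the broad outline of Niederkr\"uger's argument (Bishop family of $J$-holomorphic disks emerging from the elliptic singular set of the plastikstufe, a priori energy bound from exactness in a collar, maximum principle at the convex boundary, semi-positivity to exclude sphere bubbles, and the automatic semi-positivity of symplectic manifolds of dimension at most six). But two of your key steps as written would not go through.

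First, the boundary condition is stated incorrectly. You require the disk boundaries to lie on a single slice $\Delta_b=\{b\}\times D^2$ and call it a totally real disk. In a $(2n+2)$-dimensional filling $W$, a totally real boundary condition must be $(n+1)$-dimensional; a slice $\Delta_b$ is only $2$-dimensional, so for $n>1$ a boundary condition ``$u(\partial D^2)\subset\Delta_b$'' is wildly overdetermined and the Fredholm package does not apply. The correct boundary condition is the entire plastikstufe $P=B\times D^2$ (away from its singular set), which \emph{is} $(n+1)$-dimensional and totally real for a suitably adapted $J$. In the actual argument the disk boundaries are free to move in the $B$-direction; the moduli space of unparametrized disks with boundary on $P$ has expected dimension $\dim B + 1$, and one fibers it over $B$ rather than imposing a slice condition on the boundary. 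Writing $(W,P)$ but then freezing $b$ conflates two different things, and fixing $b$ cannot in general be achieved by a transversality argument.

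Second, the reason disks cannot reach $\partial P$ needs a real argument, not a winding-number slogan. $\partial P=B\times\partial D^2$ is a closed Legendrian submanifold, and the substance of Niederkr\"uger's proof is a local analysis near this Legendrian (a model neighborhood foliated by holomorphic hypersurfaces, or a Legendrian-boundary maximum principle) showing that a holomorphic disk boundary constrained to $P$ cannot touch $\partial P$, together with an energy/area estimate ruling out collapse at interior points of $P$. Your phrase ``which a nonconstant $J$-holomorphic disk cannot have as its boundary'' is not a reason: a bubble need not have boundary equal to $\partial P$, and the real danger is the disk boundaries accumulating on $\partial P$, which is a non-compactness of the boundary condition rather than a bubble. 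Once these two points are repaired, the parity-of-boundary-points contradiction is also somewhat delicate to set up cleanly (the Bishop ``endpoint'' is a limit of the moduli space, not a boundary point in the manifold-with-boundary sense, so you need the germ-of-Bishop-family and implicit-function-theorem uniqueness statement to identify that end), but the strategy is sound.
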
 

Recall that a $2n$--dimensional symplectic manifold $(X,\omega)$ is called
semi-positive if every element $A\in \pi_2(X)$ 
with $\omega(A)>0$ and $c_1(A)\geq 3-n$ satisfies $c_1(A)>0.$ Note all 
symplectic 4 and 6 manifolds are semi-positive as are Stein and exact 
symplectic manifolds. It seems likely that the semi-positivity
condition can be removed, but we do not address that issue here.

\subsection{Almost contact structures}\label{sec:almost}
Recall that an (oriented) almost contact structure is a reduction of the
structure group of a 
$(2n+1)$-dimensional manifold $M$ to $U(n)\times 1,$ that is a splitting of the tangent
bundle $TM=\eta\oplus \R$ where $\eta$ is a $U(n)$ bundle and $\R$ is the
trivial bundle. Clearly a co-oriented contact structure induces an almost
contact structure as it splits the tangent bundle into $\xi\oplus \R.$

In dimension 3 any oriented manifold $M$ has an almost contact structure since
the tangent bundle is trivial and the homotopy classes of almost contact
structures are in one to one correspondence with homotopy classes of oriented plane
fields. In higher dimensions the situation is more difficult. It is known, for
example, that in dimensions 5 and 7 a manifold $M$ has an almost contact
structure if and only if its third integral Stiefel-Whitney class vanishes:
$W_3(M)=0.$ See \cite{Gray59, Massey61}. Of course this condition is equivalent to the
second Stiefel-Whitney class $w_2(M)$ having an integral lift. In dimension 5
the homotopy classes of almost contact structures on a simply connected manifold
are in one to one correspondence with integral lifts of $w_2(M).$ The
correspondence is achieved by sending an almost contact structure to its first
Chern class (recall any $U(n)$-bundle has Chern classes).

\subsection{Three dimensional Lutz twists and Giroux torsion}\label{sec:3lutz}
As we wish to generalize Lutz twists from the 3--dimensional setting we digress for a moment to recall this construction. 
Consider the contact structures $\xi_{std}$ and $\xi_{ot}$ on $S^1\times \R^2$ given, respectively,  by 
\[
\xi_{std}=\ker (d\phi+r^2\, d\theta)
\]
and
\[
\xi_{ot}=\ker(\cos r\, d\phi+r\sin r \, d\theta)
\]
where $\phi$ is the coordinate on $S^1$ and $(r,\theta)$ are polar coordinates
on $\R^2.$ Let $T_{std}(a)$ be the torus $S^1\times \{r=a\}$ in $S^1\times \R^2$  with the contact structure $\xi_{std}$ and $T_{ot}(a)$ the same torus
in $S^1 \times \R^2$ together with the contact
structure $\xi_{ot}.$ Furthermore set $S_{std}(a$) to be the solid torus in $S^1\times
\R^2$ bounded by $T_{std}(a)$ with the contact structure $\xi_{std}$ and $S_{ot}(a)$ the
same torus with contact structure $\xi_{ot}.$
Finally set $A_{std}(a,b)=\overline{S_{std}(b)-S_{std}(a)}$ and similarly for $A_{ot}(a,b).$ If we are only concerned with the solid torus or thickened torus and not the contact structure on it we will drop the subscript from the notation. That is for example $S(a)$ is the solid torus $S^1\times D^2_a$ where $D^2_a$ is a disk or radius $a.$ 

Given any $b>0$ one can use the fact that $r\tan r$ takes on all positive values on $( \pi, \frac {3\pi}{2})$ and $(2\pi,\frac{5\pi}2)$ to see there is a unique $b_{\pi}\in (\pi, \frac {3\pi}{2})$ and
$b_{2\pi}\in(2\pi,\frac{5\pi}2)$ such that the 
characteristic foliation on $T_{std}(b)$ is the same as the
characteristic foliation on
$T_{ot}{(b_{\pi})}$ and $T_{ot}(b_{2\pi}).$  Since the characteristic foliation determines a contact structure
in the neighborhood of a surface, one can find some $a$ with $b-a>0$
sufficiently small and an $a_{\pi}\in(\pi,b_\pi)$ and $a_{2\pi}\in (2\pi, b_{2\pi})$
such that there is a contactomorphism $\psi_\pi,$ respectively $\psi_{2\pi},$ from
$A_{std}(a,b)$ to $A_{ot}(a_{\pi},b_{\pi}),$ respectively $A_{ot}(a_{2\pi},b_{2\pi}).$
Moreover, one may explicitly construct $\psi_\pi$ and $\psi_{2\pi}$ in such a way that $\psi_{2\pi}$ preserves the 
$\phi$ and $\theta$ coordinates and $\psi_\pi$ sends 
them to their negatives. 

Now given a transverse curve $K$ in a contact 3--manifold $(M,\xi)$  there is a
neighborhood $N$ of $K$ in $M$ that is contactomorphic to $S_{std}(b)$ in $(S^1\times
\R^2,\xi_{std}).$ A \dfn{half Lutz twist} on $K$ is the process of changing the contact structure 
$\xi$ by removing $S_{std}(a)\subset N$ from $M$ and gluing in
$S_{ot}({b_{\pi}})$ using $\psi_\pi$ to glue $A_{std}({a,b})\subset (M\setminus S_{std}(a))$ to
$A_{ot}({a_{\pi},b_{\pi}})\subset S_{ot}({b_{\pi}}).$  Similarly a \dfn{Lutz twist} (or
sometimes called \dfn{full Lutz twist}) is performed by gluing $S_{ot}({b_{2\pi}})$ in
place of $N$ using $\psi_{2\pi}.$ The subset $S_{ot}({{\pi}})$ of $S_{ot}({b_{2\pi}})$ is called a \dfn{Lutz tube}. 

We now review a similar construction. Consider the manifold $T^2\times [0,1]$
with coordinates $(\theta, \phi,r).$ A $1$--form of the type $k(r)\, d\phi+l(r)\,
d\theta$ will be contact if $k(r)l'(r)-k'(r)l(r)>0.$ Moreover the contact
structure is completely determined, see \cite{Giroux91, Honda00a}, by the slope
of the characteristic foliation $a=-\frac{l(0)}{k(0)}$ on $T^2\times \{0\}$, the
slope of the characteristic foliation $b=-\frac{l(1)}{k(1)}$ on $T^2\times\{1\}$
and the number, $n,$ of times that $-\frac{l(r)}{k(r)}=a$ for $r\in (0,1).$ (Notice
that the contact condition implies the curve $(k(r),l(r))$ is monotonically
winding around the origin in $\R^2$ and thus that $-\frac{l(r)}{k(r)}$ is
monotonically decreasing with $r$, here of course slope $\infty=-\infty$ is
allowed.) We say that the contact structure has Giroux torsion $\frac n2.$ We
denote the corresponding contact structure by $\xi_{n}^{(a,b)}$ and any contact
form for this contact structure of the form  discussed above by $\alpha_{n}^{(a,b)}.$ 

To connect this new notation to our notation above we notice that $A_{std}({a,b})$
above is contactomorphic to $(T^2\times[0,1],\xi_0^{(a^2, b^2)})$ and $A_{ot}({a,b})$ is
contactomorphic to $(T^2\times [0,1], \xi_0^{(a\tan a, b\tan b)})$ if $b-a<\pi$ or, more
generally, $(T^2\times[0,1], \xi_n^{(a\tan a, b\tan b)})$ if $n\pi<b-a<(n+1)\pi.$

Notice that given a transverse curve $K$ as above, we can find in a neighborhood $S_{std}(b)$ of $K$
a thickened torus $T^2\times [0,1]$  with a contact structure
$\xi_0^{(-\frac 1{n+1},-\frac 1 n)}.$ Let $S^{2\pi}(b)$ be the solid torus with
contact structure obtained from the one on $S_{std}(b)$ by replacing $\xi_0^{(-\frac
1{n+1},-\frac 1 n)}$ by $\xi_2^{(-\frac 1{n+1},-\frac 1 n)}.$  It is easy to
show that $S^{2\pi}(b)$ is isotopic to $S_{ot}({b_{2\pi}})$ by an isotopy fixed on the
boundary. So a full Lutz twist can be achieved by replacing $S_{std}(b)$ with
$S^{2\pi}(b).$ (One may similarly describe a half Lutz twist.) Thus we may think
of performing a Lutz twist as adding Giroux torsion along a compressible torus. 

We end this section by recalling that, up to contactomorphism, the tight contact structures on $T^3$ are 
\[
\xi_n=\ker (\alpha_n=\cos (n\phi)\, d\theta_1+\sin (n\phi)\, d\theta_2)
\]
where $(\phi,\theta_1,\theta_2)$ are the coordinates on $T^3$ and $n$ is positive, \cite{Kanda97}.  Notice that $\xi_n$ is obtained from $\xi_{n-1}$ by adding Giroux torsion. 


\section{Generalized Lutz twists}
\label{sec:gLutz_twist}

An \dfn{isotropically parameterized family of transverse curves} in a
$(2n+1)$-dimensional contact manifold $(M,\xi)$ is a smooth map 
\[
\psi:B\times S^1\to M
\]
such that $\psi(\{p\}\times S^1)$ is a curve transverse to $\xi$ for all $p\in
B$ and $\psi(B\times \{\phi\})$ is an 
isotropic submanifold of $(M,\xi)$ for all $\phi\in S^1.$ We say the family is
\dfn{embedded} if $\psi$ is an embedding. Proposition~\ref{thm:generalnbhd} easily
yields the following result.

\begin{lemma}\label{lem:nbhdoffamily}
Let $(M,\xi)$ be a contact manifold  of dimension $2n+1.$ Suppose we have an
embedded isotropically parameterized family of transverse curves $B\times S^1$
in $(M,\xi),$ where the dimension of $B$ is $n-1.$ Moreover assume that the
isotropic submanifold $B\times \{\phi\}$ has trivial conformal symplectic
normal bundle. Then $B\times S^1$ has a neighborhood $N$ in $(M,\xi),$
contactomorphic to a neighborhood of $Z\times S^1\times\{(0,0)\}$ in the contact manifold 
\[
(T^*B\times S^1\times D^2, \ker(\lambda_{can}+(d\phi+r^2\, d\theta)))
\]
where $Z$ is the zero section in $T^*B,$ $\phi$ is the angular coordinate on $S^1,$ $D^2$ is the unit disk in the
plane with polar coordinate $(r,\theta)$ and $\lambda_{can}$ is the canonical
1-form on $T^*B.$
\end{lemma}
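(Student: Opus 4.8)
The plan is to combine the Weinstein isotropic neighborhood theorem (Proposition~\ref{prop:csndet}) with the explicit structure of the model contact form on $T^*B\times S^1\times D^2$. First I would regard $L = B\times S^1$ as an isotropic submanifold of $(M,\xi)$ of dimension $n$, hence Legendrian. Its conformal symplectic normal bundle therefore has rank $2(n-n)=0$, so it is automatically trivial, and by Proposition~\ref{prop:csndet} (or, more directly, the Legendrian neighborhood theorem) a neighborhood of $L$ in $(M,\xi)$ is contactomorphic to a neighborhood of the zero section in $J^1(L) = T^*L\times\R$. However, we need a more refined model that remembers the decomposition $L = B\times S^1$ and, crucially, records the way the ambient CSN bundle of each slice $B\times\{\phi\}$ sits inside $\xi$; this is what forces the appearance of the $D^2$ factor with the form $d\phi + r^2\,d\theta$.

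The key step is to verify that the proposed model $(T^*B\times S^1\times D^2,\ \ker(\lambda_{can} + d\phi + r^2\,d\theta))$ is genuinely contact and that $B\times S^1 = $ (zero section of $T^*B$) $\times S^1\times\{(0,0)\}$ is an isotropic (indeed Legendrian) submanifold realizing the given data. A direct computation shows that if $\beta = \lambda_{can}+d\phi+r^2\,d\theta$ then $d\beta = d\lambda_{can} + 2r\,dr\wedge d\theta$, and $\beta\wedge(d\beta)^n$ is a nonvanishing volume form near the zero section (the $T^*B$ directions contribute $\lambda_{can}\wedge(d\lambda_{can})^{n-1}$ paired against the symplectic $dr\wedge d\theta$ in the normal $D^2$, plus the $d\phi$ term). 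Along the locus $B\times S^1\times\{0\}$ we have $\beta = d\phi$ restricted to $TB$-directions vanishes, $\lambda_{can}$ vanishes on the zero section, and $r=0$; so $TL \subset \ker\beta$, i.e. $L$ is Legendrian, and one reads off that the normal $D^2$-slice to each $B\times\{\phi\}$ inside $\ker\beta$ carries the standard symplectic form $dr\wedge d\theta$ (a trivial rank-2 conformal symplectic bundle) together with a Reeb-type $\R$-direction coming from $d\phi$. This matches the abstract description of the normal data of an isotropically parameterized family with trivial CSN bundle given before the lemma.

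With the model identified, the proof concludes by an application of Proposition~\ref{thm:generalnbhd}: one builds a bundle map over $L = B\times S^1$ identifying $TM|_L$ with $T(T^*B\times S^1\times D^2)|_L$ that carries $\xi|_L$ to $(\ker\beta)|_L$, respects the splitting $TL \oplus (\text{CSN slices}) \oplus \R$, and is chosen so that the contact forms $\alpha$ and $\beta$ satisfy $\alpha|_L = \beta|_L$ and $d\alpha|_L = d\beta|_L$ after scaling $\alpha$ (using that $d\alpha$ on $\xi$ is determined up to conformal factor by $\xi$, and our identification matches the conformal symplectic structures); then Proposition~\ref{thm:generalnbhd} produces the desired contactomorphism fixed on $L$. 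The one point requiring care — and the main obstacle — is arranging the bundle isomorphism so that the $d\phi$ and $r^2\,d\theta$ pieces appear \emph{with exactly this coupling}: the transverse-curve condition on $\psi(\{p\}\times S^1)$ says the $d\phi$ direction is positively transverse to $\xi$, while the isotropic condition on the $B\times\{\phi\}$ slices pins down the symplectic framing of the $(r,\theta)$-disk; reconciling these simultaneously over all of $B\times S^1$ is the content of "once one notices that the conformal symplectic normal bundles can be identified such that the symplectic structures induced by given contact forms agree," and is handled exactly as in the proof of Proposition~\ref{prop:csndet}.
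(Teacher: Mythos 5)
There is a genuine error at the start of your argument: $L=B\times S^1$ is \emph{not} an isotropic (hence not a Legendrian) submanifold of $(M,\xi)$. By hypothesis each curve $\psi(\{p\}\times S^1)$ is \emph{transverse} to $\xi$, so $T_xL$ contains a vector positively transverse to $\xi_x$; only the slices $B\times\{\phi\}$ are isotropic. Your own computation in the model exposes this: along the zero section times $S^1\times\{(0,0)\}$ the form $\beta=\lambda_{can}+d\phi+r^2\,d\theta$ restricts on $TL$ to $d\phi$, which is nonzero on $\partial/\partial\phi\in TL$, so $TL\not\subset\ker\beta$ --- directly contradicting your conclusion ``$TL\subset\ker\beta$, i.e.\ $L$ is Legendrian.'' Consequently the appeal to the Legendrian neighborhood theorem and to $J^1(L)$, and the rank count $2(n-n)=0$ for $CSN(L)$, rest on a false premise; the transversality of the $S^1$ factor is precisely why the model carries the Reeb-like term $d\phi$ rather than a jet-space form. (A smaller inaccuracy: in your verification of the contact condition, $\lambda_{can}\wedge(d\lambda_{can})^{n-1}$ is a $(2n-1)$-form pulled back from the $(2n-2)$-dimensional $T^*B$ and hence vanishes; the surviving term of $\beta\wedge(d\beta)^n$ is $n\,d\phi\wedge(d\lambda_{can})^{n-1}\wedge 2r\,dr\wedge d\theta$.)

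Your final paragraph, by contrast, is essentially the paper's actual proof and is the correct route: choose a diffeomorphism of $B\times S^1$ respecting the product structure; use the triviality of the conformal symplectic normal bundle of each slice $B\times\{\phi\}$ to write the normal bundle of $B\times S^1$ inside the contact hyperplanes as $\xi_x/(T_xB)^\perp\oplus\R^2$ and build a bundle isomorphism along $B\times S^1$ carrying $\xi$ to $\ker\beta$; rescale in the $CSN$ directions so the symplectic structures induced by the contact forms agree, hence $d\alpha|_L=d\beta|_L$; then apply Proposition~\ref{thm:generalnbhd}. Note that Proposition~\ref{thm:generalnbhd} requires only $\xi_0|_N=\xi_1|_N$ and $d\alpha_0|_N=d\alpha_1|_N$, not equality of the forms themselves on $N$. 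If you delete the Legendrian framing and carry out this bundle-level matching honestly --- with the transverse $S^1$ direction playing the role of the $\R$ summand in $T_xM=T_x(B\times\{\phi\})\oplus\bigl(\xi_x/(T_xB)^\perp\bigr)\oplus CSN_x\oplus\R$ --- the argument goes through.
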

\begin{proof}
Choose a diffeomorphism $f$ from $B\times S^1$ in $M$ to $B\times S^1$ in
$T^*B\times S^1\times D^2$ that respects the product structure. We can choose
the normal bundle $\nu$ to $B\times S^1$ in both manifolds to be contained in
the contact hyperplanes. As the conformal symplectic normal bundle to $B$ is
trivial we have $\nu_x=\xi_x/(T_xB)^\perp\oplus \R^2$ and $\xi_x=T_xB\oplus
\nu_x.$ Thus extending our diffeomorphism $f$ to a neighborhood of $B\times S^1$
we can assume that it takes the contact hyperplanes along $B\times S^1$ in $M$
to the contact hyperplanes along $B\times S^1$ in $T^*B\times S^1\times D^2.$ In
addition, we can scale our bundle map along the conformal symplectic normal
direction such that it actually preserves the symplectic structure induced by
the contact forms. Thus our extension of $f$ can be assumed to preserve the
exterior derivative of our contact forms along $B\times S^1.$ Now
Proposition~\ref{thm:generalnbhd} gives the desired contactomorphic neighborhoods. 
\end{proof}

A neighborhood of $B\times S^1$ as given in Lemma~\ref{lem:nbhdoffamily} is contactomorphic to 
\[
N_\epsilon\times S^1\times D^2_b
\]
where $N_\epsilon$ is a neighborhood of the zero section in $T^*B$ and $D^2_b$
is a disk of radius 
$b.$ Using the notation from Subsection~\ref{sec:3lutz}, this is contactomorphic
to $N_\epsilon\times S_{std}(b).$ Denote by $P$ the smooth manifold $N_\epsilon\times
S(b)$ with no particular contact structure on it. 

\begin{lemma}\label{lem:main}
There is a contact structure on $P$ that agrees with the contact structure 
$\ker ( \lambda_{can} +(d \phi + r^2 d \theta))$ near the boundary and agrees with the one on 
$N_{\epsilon} \times S_{ot}(b_{2\pi})$ on $N_{\epsilon ''}\times S(b)$ for some positive
$\epsilon ''\ll\epsilon$. 
\end{lemma}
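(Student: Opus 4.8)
The plan is to build the contact structure on $P = N_\epsilon \times S(b)$ by interpolating, in the $S^1 \times D^2$ factor, between the standard contact form $d\phi + r^2\, d\theta$ and the full-Lutz-twisted form, while carrying the $\lambda_{can}$ term along for the ride and checking that the contact condition survives. Concretely, recall from Subsection~\ref{sec:3lutz} that inside $S_{std}(b)$ there is a thickened torus $T^2 \times [0,1]$ carrying $\xi_0^{(-\frac{1}{n+1}, -\frac{1}{n})}$, and that replacing this piece by $\xi_2^{(-\frac{1}{n+1}, -\frac{1}{n})}$ produces $S^{2\pi}(b)$; moreover $S^{2\pi}(b)$ agrees with $S_{std}(b)$ outside that collar. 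So on the level of the $S^1 \times D^2$ factor alone we already have a contact form $\beta$ on $S(b)$ that equals $d\phi + r^2\, d\theta$ near $\partial S(b)$ and equals the torsion-added form on the inner region. The content of the lemma is to promote this to a contact form on the product $N_\epsilon \times S(b)$ that restricts to $\lambda_{can} + (d\phi + r^2\, d\theta)$ near the boundary of $P$ and to $\lambda_{can} + \beta_{2\pi}$ on $N_{\epsilon/2} \times S(b)$.

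First I would write the torsion-added form on $S(b)$ as $\beta = k(\rho)\, d\phi + l(\rho)\, d\theta$ on the relevant collar (in a radial coordinate $\rho$), with $(k,l)$ a path that winds monotonically so that $\beta$ is contact, $\beta = d\phi + r^2\, d\theta$ for $\rho$ near $b$, and the slope profile realizes two extra half-twists. Then on $N_\epsilon \times S(b)$ consider the family of $1$--forms $\alpha = \lambda_{can} + \beta$; since $d(\lambda_{can} + \beta) = \omega_{can} + d\beta$ and $\lambda_{can}$ only involves the $T^*B$ coordinates while $\beta$ only involves $(\phi, r, \theta)$, the top form $\alpha \wedge (d\alpha)^n$ splits as a sum of wedge terms, and the dominant term is $\lambda_{can} \wedge \omega_{can}^{n-1} \wedge (\beta \wedge d\beta)$, which is positive precisely because $\beta$ is contact on the $3$--dimensional factor and $\omega_{can}$ is symplectic on $T^*B$. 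One should double-check there are no cancelling cross terms, but because $\beta$ contributes no $d\rho$-free cross-terms of the wrong type this is a routine bookkeeping computation: $\lambda_{can} + \beta$ is contact on $N_\epsilon \times S(b)$ whenever $\beta$ is contact on $S(b)$. Finally, to get the required normalization near $\partial P$ versus on $N_{\epsilon/2} \times S(b)$, I would choose $\beta$ so that it is \emph{already} equal to $d\phi + r^2\, d\theta$ on a neighborhood of $r = b$ (giving the boundary condition on the $S(b)$ factor for all values of the $T^*B$ coordinate), and separately note that the statement only asks for agreement with $N_\epsilon \times S^{2\pi}(b)$ over the shrunken base $N_{\epsilon/2}$, so no base interpolation in $\lambda_{can}$ is needed at all — the form $\lambda_{can} + \beta$ already equals $\lambda_{can} + \beta_{2\pi}$ over all of $N_\epsilon \times (\text{inner region})$.

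The main obstacle, and the only place where care is genuinely required, is the contact-condition check for $\lambda_{can} + \beta$ on the product: one must confirm that the positivity of $\alpha \wedge (d\alpha)^n$ is controlled solely by the $3$--dimensional contact condition $k l' - k' l > 0$ on $\beta$, uniformly over the (possibly noncompact) base $T^*B$ and independent of how large the fiber coordinates of $T^*B$ get, so that the gluing to the standard model near $\partial P$ is clean. Since $\lambda_{can}$ and $\beta$ live on complementary factors this reduces to a Leibniz-rule expansion of $(d\lambda_{can} + d\beta)^n$ followed by wedging with $\lambda_{can} + \beta$; the only surviving term of top degree is $\binom{n}{1}\,\lambda_{can}\wedge (d\lambda_{can})^{n-1}\wedge\beta\wedge d\beta$, which is a positive multiple of the product volume form exactly when $\beta$ is contact. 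This is essentially the standard fact that the contactization-type sum of a symplectic form on the base and a contact form on the fiber is contact, so once it is recorded the lemma follows by taking $\beta$ to be the explicit full-Lutz profile described in Subsection~\ref{sec:3lutz}.
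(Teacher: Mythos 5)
The computation you outline does show that $\lambda_{can}+\beta$ is contact on $N_\epsilon\times S(b)$ whenever $\beta$ is contact on the three--dimensional factor (though your degree bookkeeping is off: the surviving term of $\alpha\wedge(d\alpha)^n$ is $n\,(d\lambda_{can})^{n-1}\wedge\beta\wedge d\beta$, not $\lambda_{can}\wedge(d\lambda_{can})^{n-1}\wedge\beta\wedge d\beta$, which has degree $2n+2$ and vanishes identically). But this observation does not prove the lemma, and the place where you declare that ``no base interpolation is needed at all'' is exactly where the whole difficulty lives. The boundary of $P=N_\epsilon\times S(b)$ has two pieces, $N_\epsilon\times\partial S(b)$ and $\partial N_\epsilon\times S(b)$, and the lemma requires the contact structure to be standard near \emph{both} --- otherwise the Lutz tube cannot be glued back into $M$ in place of the neighborhood that was removed. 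Your form $\lambda_{can}+\beta_{2\pi}$ is standard near $N_\epsilon\times\partial S(b)$ (since $\beta_{2\pi}=\alpha_{std}$ near $r=b$), but near $\partial N_\epsilon\times S(b)$ it is still the fully twisted form on the annular region of $S(b)$ where the torsion is inserted. So you have only handled the easy boundary component.

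The genuine content of the lemma is the transition, in the base ($T^*B$) direction, from the twisted structure over the inner part $N_{\epsilon''}$ to the standard structure near $\partial N_\epsilon$; in the paper's notation this is the extension over $\overline{(N_{\epsilon'}-N_{\epsilon''})}\times A(\delta'',\delta')\cong W\times[a,b]\times[\delta'',\delta']\times T^2$. This cannot be done by a naive convex interpolation of the two forms in the base direction: as the paper points out, producing it from an exact symplectic cobordism on $[0,1]\times[0,1]\times T^2$ interpolating between $e^t\alpha_0^{(0,\infty)}$ and $e^t\alpha_2^{(0,\infty)}$ is impossible, since such a cobordism would yield symplectic fillings of overtwisted contact $3$--manifolds. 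The paper instead constructs the needed contact structure on $W\times[0,1]\times[0,1]\times T^2$ (Lemmas~\ref{RMN} and~\ref{lem:3.5}) by taking a double branched cover of $W\times D^2\times T^2$ along a contact submanifold produced by a Bourgeois-type embedding of $W\times T^2$, and then cutting out a suitable piece. Note that when $B$ is a point the troublesome boundary component $\partial N_\epsilon\times S(b)$ is empty and your argument does suffice --- which is precisely the classical three--dimensional Lutz twist --- but for $\dim B>0$ your proposal has a gap that the branched cover construction is designed to fill.
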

We define the \dfn{(generalized) Lutz twist} of $(M,\xi)$ along $B \times S^1$
to be the result 
of removing $N_\epsilon \times S(b)$, with the standard contact structure, from $M$ and replacing it with the contact
structure constructed in the lemma.

We call the contact manifold $P,$ with the contact structure described in Lemma~\ref{lem:main}, a
\dfn{Lutz tube with core $B$}. Given that there is a  Lutz tube with core
$B$  as claimed in Lemma~\ref{lem:main} the following theorem is almost
immediate.


\begin{theorem}\label{thm:genlutz}
Let $(M,\xi)$ be a contact manifold  of dimension $2n+1.$ Suppose we have an
embedded isotropically parameterized family of transverse curves $B\times S^1$
in $(M,\xi),$ where the dimension of $B$ is $n-1.$ 
 Moreover assume that the
isotropic submanifold $B\times \{\phi\}$ has trivial conformal symplectic
normal bundle.
Then we may alter $\xi$ in
any neighborhood of $B\times S^1$ to a contact structure $\xi'$ that is
ps-overtwisted. Moreover there is an $S^1$-family of overtwisted families modeled
on $B$ in $\xi'.$
\end{theorem}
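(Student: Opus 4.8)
The plan is to deduce Theorem~\ref{thm:genlutz} from Lemma~\ref{lem:main} together with the neighborhood result Lemma~\ref{lem:nbhdoffamily}, so the only real work is to exhibit the overtwisted family inside the Lutz tube $P$. First I would invoke Lemma~\ref{lem:nbhdoffamily} to identify a neighborhood $N$ of $B\times S^1$ in $M$ with $N_\epsilon\times S^1\times D^2_b$, equipped with the contact form $\lambda_{can}+(d\phi+r^2\,d\theta)$; by Lemma~\ref{lem:main} we may replace the contact structure on this neighborhood with one that (a) agrees with the old one near the boundary, so the replacement glues back into $M$ to produce a global contact structure $\xi'$, and (b) agrees with $N_\epsilon\times S^{2\pi}(b)$ on $N_{\epsilon/2}\times S_{std}(b)$. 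Since the alteration takes place inside an arbitrarily small neighborhood of $B\times S^1$, this establishes the ``alter in any neighborhood'' part.

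Next I would locate the overtwisted families. Inside each slice $\{q\}\times S^{2\pi}(b)$ for $q$ a fixed point of $N_{\epsilon/2}$, the three--dimensional full Lutz tube contains an overtwisted disk $D_q$; concretely, after the isotopy identifying $S^{2\pi}(b)$ with $S_{ot}(b_{2\pi})$ fixed on the boundary (as recalled in Subsection~\ref{sec:3lutz}), one has the standard meridional overtwisted disk in $\xi_{ot}$. Letting $q$ range over a Legendrian (in fact a point, times the zero section of $T^*B$) we get a candidate family; but to get a genuine overtwisted family modeled on $B$ one must instead take the product $B\times D_{q}$ where now $B$ sits as the zero section of $T^*B$. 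The point is that in the model $(T^*B\times\R\times D^2,\ker(\lambda_{can}+(dz+r^2\,d\theta)))$ the submanifold $\{$zero section$\}\times\R\times D^2$ has characteristic distribution that splits as a product: along $T^*B$ the contact form restricts to $\lambda_{can}$, which vanishes on the zero section, so the characteristic distribution of $B\times(\text{overtwisted disk})$ is exactly $TB$ plus the characteristic foliation of the three--dimensional overtwisted disk. Then I would check properties (1)--(5) of an overtwisted family against Subsection~\ref{sec:families}: integrability of $P_\xi$ follows from the product structure and integrability of the 3--dimensional characteristic foliation; $B=B\times\{(0,0)\}$ is isotropic (it is the zero section) and is the singular set; it is a normally elliptic singular set because the $D^2$--slice carries the elliptic singularity coming from $r^2\,d\theta$ at the origin, and the conformal symplectic normal bundle of $B$ is trivial with $T_pB\oplus CSN_p=T_pP$ by construction; the boundary $B\times\partial D^2$ is a leaf; and every other leaf is $B\times(0,1)$ approaching $B$ at the elliptic end and $\partial P$ nicely at the other, using the ``leaves approach $L$ nicely'' discussion of Subsection~\ref{sec:distributions}. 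Letting the overtwisted disk rotate through the $S^1$ factor, or simply varying the meridional disk within the Lutz tube, yields the claimed $S^1$--family of overtwisted families modeled on $B$.

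The main obstacle I anticipate is verifying that the product $B\times D$ really is a \emph{foliated} submanifold with the correct singular behavior \emph{after} the cut-and-paste of Lemma~\ref{lem:main} — that is, that the three--dimensional overtwisted disk in $S^{2\pi}(b)$ can be chosen to live in the region $N_{\epsilon/2}\times S_{std}(b)$ where the new contact structure is known explicitly, and that its characteristic foliation genuinely has the ``leaves approach $\partial P$ nicely'' form rather than some uncontrolled behavior near the gluing region. This is handled precisely by the flexibility built into Subsection~\ref{sec:distributions}: since we only need $B\times\partial D$ to be an isolated singular set with leaves nicely approaching it (and may then perturb to make it a leaf), and since the normally elliptic singular set condition is open and can be arranged by the isotopy in the ``normally elliptic singular set'' paragraph, the verification reduces to the corresponding well--understood statement about the 3--dimensional full Lutz tube. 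I would therefore spend most of the proof carefully citing these normalizations rather than recomputing foliations, and conclude that $\xi'$ is weakly overtwisted with the desired $S^1$--family.
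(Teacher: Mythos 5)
Your proposal follows the paper's proof essentially verbatim: perform the generalized Lutz twist of Lemma~\ref{lem:main} inside the model neighborhood from Lemma~\ref{lem:nbhdoffamily}, then observe that $B\times\{\phi\}\times D^2_\pi$ (with $B$ the zero section of $T^*B$) is an overtwisted family for each $\phi\in S^1$, the product structure making the characteristic distribution $TB$ plus the $3$--dimensional overtwisted foliation. The paper simply declares this last verification ``obvious,'' whereas you spell out properties (1)--(5) and the normalization near $\partial P$; that is a fleshed-out version of the same argument, not a different route.
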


To see how this theorem generalizes a Lutz twist in dimension 3, notice that the
only possibility for a connected $B$ in a contact 3-manifold $(M,\xi)$ is
$B=\{pt\}.$ So the embedded isotropically parameterized family of
transverse curves in this case is simply a transverse knot $K\subset M.$ Clearly
the Lutz tube $S_{ot}({\pi})$ is an $S^1$-family of overtwisted disks. 

\begin{proof}
The modification mentioned in the theorem is, of course, a (full) Lutz twist. It
is clear that this can be performed in any arbitrarily small neighborhood of
$B\times S^1.$ We are left to check that we have an $S^1$-family of
the embedded overtwisted families modeled on $B,$ but this is obvious as one
easily checks that $B\times \{\phi\}\times D^2_{\pi},$ where $B$ is thought of
as the zero section of $T^*B,$ is an overtwisted family modeled on $B$ for each
$\phi\in S^1$ contained in $P$.
\end{proof}

\subsection{Preliminaries for constructing a Lutz tube with the core $B$}\label{sec:setup}

The purpose of this section is to motivate  as well as set up the preliminaries for the proof of
 Lemma~\ref{lem:main} which will establish the existence of the Lutz tube with
core $B.$ We start  by setting up some preliminary notation (and will use notation established in Subsection~\ref{sec:3lutz}). 
 

Suppose we are given the standard contact structure
$\xi_{std}=\ker\alpha_{std},$ where $\alpha_{std}= d\phi +r^2\, d\theta$ on
$S(\delta)=S^1\times D^2_\delta.$ We can choose $0<\delta''< \delta'<\delta$ and set 
\[
\alpha_{ot}= \ker (k(r)\, d\phi + l(r)\, d\theta)
\]
where $k$ and $l$ are chosen such that $l'(r)k(r)-k'(r)l(r)>0,$ $k(r)=1$ and 
$l(r)=r^2$ for $r\in[0,\delta'']\cup [\delta',\delta]$ and the curve
$(k(r),l(r))$ winds around the origin once as $r$ runs from $0$ to $\delta.$ 


Let $N_\epsilon$ be a neighborhood of the zero section $Z\subset T^*B$ and
denote the Liouville form by $\lambda_{can}.$ Choose some
$0<\epsilon''\ll\epsilon'< \epsilon.$ Recall we want to replace
$\ker(\lambda_{can}+\alpha_{std})$ on $N_{\epsilon''}\times S(\delta)$ with $\ker(\lambda_{can}+\alpha_{ot}).$  

We begin to define a contact structure $\xi$ on $N_{\epsilon}\times S(\delta)$ as follows. 
\[
\xi=
\begin{cases}
\ker(\lambda_{can}+\alpha_{std})& \text { on } \left(\overline{(N_{\epsilon}-N_{\epsilon'})}\times S(\delta)\right) 
				\cup \left( N_{\epsilon}\times A(\delta', \delta)\right)
			  \cup \left( N_\epsilon \times S(\delta'')\right)\\
\ker(\lambda_{can}+\alpha_{ot})& \text{ on } N_{\epsilon''}\times S({\delta}).
\end{cases}
\]

\begin{figure}[ht]
  \relabelbox \small {
  \centerline{\epsfbox{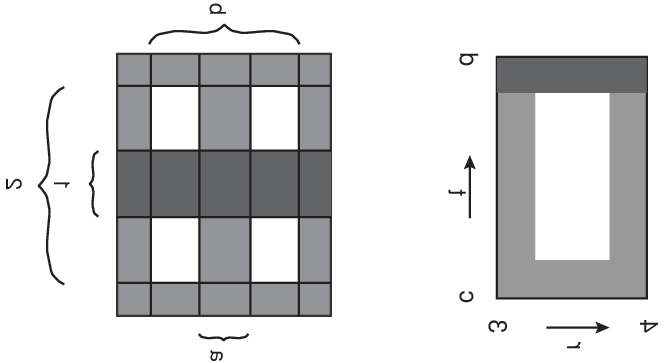}}}
  \relabel{1}{$N_{\epsilon''}$}
  \relabel{2}{$N_{\epsilon'}$}
  \relabel{a}{$S({\delta''})$}
  \relabel{b}{$S({\delta'})$}
    \relabel{3}{$\delta''$}
  \relabel{4}{$\delta'$}
  \relabel{t}{$t$}
  \relabel{r}{$r$}
  \relabel{c}{$a$}
  \relabel{d}{$b$}
    \endrelabelbox
        \caption{On the left is the manifold $N_\epsilon\times S(\delta).$ The lighter shaded regions are where the contact form is given by $\ker(\lambda_{can}+\alpha_{std})$ and the darker shaded regions are where it is given by $\ker(\lambda_{can}+\alpha_{ot}).$ The contact structure needs to be extended over the unshaded regions. On the right is the $rt$-coordinates of the manifold $\overline{(N_{\epsilon'}-N_{\epsilon''})}\times A(\delta'',\delta')$ written as $W\times[a,b]\times
[\delta'',\delta']\times S^1\times S^1.$ The lighter shaded region is where the contact form is given by $\lambda +e^t\alpha_{std}$ and on the darker shaded region the contact form is given by  $\lambda +e^t\alpha_{ot}.$ The contact structure needs to be extended over the unshaded region.}
     \label{fig-1}
\end{figure}

The left hand side of Figure~\ref{fig-1} shows the region where the contact structure is already
defined. Notice that $\ker(\lambda_{can}+\alpha_{std})$ and
$\ker(\lambda_{can}+\alpha_{ot})$ agree on $N_{\epsilon''}\times A(\delta',\delta)$  and on $N_{\epsilon''}\times S(\delta'')$ and thus $\xi$ is well defined where it
is defined. 
Notice that in the case when $B$ is a point we have already defined the contact structure on all of $P$ and it clearly
corresponds to the (full) Lutz twist. 
When $B$ has positive dimension we claim that $\xi$ may be extended over the rest of
$N_{\epsilon}\times S(\delta).$  That is we need to extend $\xi$ over
$\overline{(N_{\epsilon'}-N_{\epsilon''})}\times A(\delta'',\delta').$ 

To this end we notice that if we denote by $\lambda$ the 1--form $\lambda_{can}$
restricted to the unit cotangent bundle $W$ of $T^*B$ then there is a diffeomorphism from  $\overline{(N_{\epsilon'}-N_{\epsilon''})}$ to $[\epsilon'',\epsilon']\times W$ that takes the 1--form $\lambda_{can}$ to $t\lambda,$ where $t$ is the coordinate on $[\epsilon'',\epsilon'].$ Moreover, $A(\delta'',\delta')= S^1\times
\overline{(D^2_{\delta'}-D^2_{\delta''})}$ can be written $S^1\times
[\delta'',\delta']\times S^1,$ with coordinates $(\phi, r,\theta).$ Setting $a= -\ln \epsilon'$ and $b=-\ln \epsilon''$ we can
write  $\overline{(N_{\epsilon'}-N_{\epsilon''})}\times A(\delta'',\delta')$ as  $W\times[a,b]\times
[\delta'',\delta']\times S^1\times S^1.$ This identification is orientation
preserving where we have sent $t$ to $e^{-t}$ and the last three coordinates are
$(r,\theta, \phi).$ 
Near $t=a, t=b$ and $r=\delta'', \delta'$ we have (the
germ of) a contact form defined as the kernel of $e^{-t}\lambda+\alpha$ where
$\alpha$ is either $\alpha_{std}$ or $\alpha_{ot}.$ Of course this contact
structure is also defined by $\lambda+e^{t}\alpha.$ So we see that we need to
construct a contact structure on 
$\overline{(N_{\epsilon'}-N_{\epsilon''})}\times A(\delta'',\delta')$ that is equal to the kernel of
$\lambda$ plus the symplectization of $\alpha_{std}$ near $t=a$ and $r=\delta'',
\delta'$ and equal to the kernel of $\lambda$ plus the symplectization of
$\alpha_{ot}$ near $t=b.$  See the right hand side of Figure~\ref{fig-1}.

More specifically we can assume that the neighborhoods where $\alpha_{ot}$ and
$\alpha_{std}$ agree and the coordinates on $S^1\times S^1$ are chosen in such a
way that near $\{a\}\times [\delta'',\delta']\times S^1\times S^1$ the 1--form
is diffeomorphic to $e^t\alpha_0^{(0,\infty)}.$ (See Section~\ref{sec:3lutz} for
the notation being used here. Also notice that we have chosen coordinates on $S^1\times S^1$ so that $\alpha_0^{(0,\infty)}$ is the appropriate form to use, instead of the form $\alpha_0^{((\delta'')^2,(\delta')^2)}$ which we would have to use if not for the coordinate change. This simplifies notation and makes the construction easier to follow. To see that such a choice of coordinates is possible we notice that $\delta''$ and $\delta'$ can be chosen so that the the smallest integral vectors spanning the characteristic foliations on $T_{std}(\delta'')$ and $T_{std}(\delta')$ from an integral basis for $\Z^2.$)  Similarly near $\{b\}\times 
[\delta'',\delta']\times S^1\times S^1$ the 1--form is diffeomorphic to
$e^t\alpha_2^{(0,\infty)}.$

Now to  motivate a possible approach for extending the contact structure $\xi$
over all of  $N_{\epsilon}\times S(\delta),$ we notice that {\em if}
we could construct an exact symplectic structure $d\beta$ on $[0,1]\times
[0,1]\times T^2$ such that near  $\{0\}\times  [0,1]\times T^2$ and $[0,1]\times \{0,1\}\times T^2$ 
the $1$--form
$\beta=e^t\alpha_0^{(0,\infty)}$ and near $\{1\}\times[0,1]\times T^2$ we have
$\beta=e^t\alpha_2^{(0,\infty)}$ 
then we could extend $d\beta$ to an exact symplectic structure on
$\R\times [0,1]\times T^2$ that looks like the symplectization of
$\alpha_0^{(0,\infty)}$ for negative $t$ and like the symplectization of
$\alpha_2^{(0,\infty)}$ for $t$ larger than $1$. By  rescaling the exact
symplectic form if necessary and choosing $a$ and $b$ sufficiently far apart
(notice we can clearly do this as $N_\epsilon$ minus the zero section is exact
symplectomorphic  to $(-\infty, c)\times W$ for some $c$) there will be some
subset of $W\times \R\times [0,1]\times T^2$ with contact form $\lambda+\beta$
that has a neighborhood of its boundary contactomorphic to a neighborhood of the
boundary of $W\times [a,b]\times [\delta'',\delta']\times S^1\times S^1.$  Of
course it is well known that such an exact symplectic cobordism (even non-exact)
cannot exist as it would allow one to construct symplectic fillings of
overtwisted contact structures. But we will show below that there is a contact
structure on $W\times [0,1]\times[0,1]\times T^2$ that looks like this one near
the boundary and hence we can finish the argument as above.

\subsection{A construction of a Lutz tube with the core $B$}

From the discussion at the end of the last section it is immediate that Lemma~\ref{lem:main}, and hence the existence of a Lutz tube with core $B,$
will be established once we demonstrate the following lemma. 

\begin{lemma}\label{RMN} 
In  the notation of Section~\ref{sec:3lutz}, there is a contact structure on $W \times ([0,1] \times [0,1])  \times T^2$ such that the following properties are satisfied:
\begin{enumerate}
\item near $W\times \{0\}\times [0,1]\times T^2$  and $W\times [0,1]\times  \{0,1\}\times T^2$ the contact structure is  contactomorphic to $\lambda+e^t \alpha_0^{(0,\infty)},$ and
\item near $W\times \{1\}\times [0,1]\times T^2$ the contact structure is  contactomorphic to $\lambda+e^t \alpha_2^{(0,\infty)}.$
\end{enumerate}
\end{lemma}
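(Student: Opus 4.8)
The plan is to build the required contact form by hand, as a modification of the ``product'' form $\lambda+e^{t}\alpha^{(0,\infty)}$, using the room carried by the cotangent sphere bundle factor $W$ (of dimension $2n-3$) to evade the nonexistence of the exact symplectic cobordism discussed at the end of Section~\ref{sec:setup}. Write the square as $[0,1]_{t}\times[0,1]_{r}$ and set $Z=[0,1]_{r}\times T^{2}$ (of dimension $3$). One should \emph{not} look for a contact form of the product type $\lambda+\beta$ on $W\times[0,1]_{t}\times Z$: a quick degree count shows that for such a form the contact condition $\mu\wedge(d\mu)^{n}\neq0$ forces $(d\beta)^{2}\neq0$, i.e.\ $d\beta$ symplectic on $[0,1]^{2}\times T^{2}$ with the prescribed boundary values, which is exactly the forbidden cobordism. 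Instead I would allow the coefficient of $\lambda$ to vary, taking $\mu=g(t)\,\lambda+\beta$ with $g>0$ and $\beta=\beta_{t}+u_{t}\,dt$ a $t$--family of $1$--forms on $Z$. Expanding $\mu\wedge(d\mu)^{n}$ and using that $\lambda\wedge(d\lambda)^{n-2}$ is a volume form on $W$, the contact condition reduces to the requirement that, for every $t$, the $3$--form
\[
g(t)\,d_{Z}\beta_{t}\wedge\bigl(\partial_{t}\beta_{t}-d_{Z}u_{t}\bigr)\ +\ g'(t)\,\beta_{t}\wedge d_{Z}\beta_{t}
\]
be a positive volume form on $Z$; writing $v_{t}$ for an oriented generator of $\ker d_{Z}\beta_{t}$ whenever $d_{Z}\beta_{t}\neq0$, this says $g\,(\partial_{t}\beta_{t}-d_{Z}u_{t})(v_{t})+g'\,\beta_{t}(v_{t})>0$ pointwise. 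The second term encodes the contact condition for $\beta_{t}$ on $Z$ (it is positive exactly where $\beta_{t}$ is contact), and it is available only because $g$ is nonconstant --- precisely the extra flexibility absent from the symplectic problem.

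For the boundary normalizations take $g\equiv1$, $u_{t}\equiv0$, $\beta_{t}=e^{t}\alpha_{0}^{(0,\infty)}$ near the faces $\{t=0\}$, $\{r=0\}$ and $\{r=1\}$, and $\beta_{t}=e^{t}\alpha_{2}^{(0,\infty)}$ near $\{t=1\}$. There the displayed $3$--form is $\pm e^{2t}\,\alpha^{(0,\infty)}\wedge d_{Z}\alpha^{(0,\infty)}$, a volume form since $\alpha_{0}^{(0,\infty)}$ and $\alpha_{2}^{(0,\infty)}$ are contact on $Z$, and $\mu$ literally equals the models $\lambda+e^{t}\alpha_{0}^{(0,\infty)}$, $\lambda+e^{t}\alpha_{2}^{(0,\infty)}$, giving (1) and (2) (alternatively one invokes Proposition~\ref{thm:generalnbhd}). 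These models agree near the four corners because, by the choices of Section~\ref{sec:setup}, $\alpha_{0}^{(0,\infty)}$ and $\alpha_{2}^{(0,\infty)}$ have the same restriction near $\{r=0\}$ and near $\{r=1\}$.

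The crux is the interpolation over $t\in[\varepsilon,1-\varepsilon]$ between $\alpha_{0}^{(0,\infty)}$ and $\alpha_{2}^{(0,\infty)}$, rel boundary near $\{r=0\}\cup\{r=1\}$, keeping the displayed $3$--form a positive volume form. Here one genuinely leaves the symplectic world: the two forms differ by two units of Giroux torsion, hence are not isotopic rel boundary, so no rel--boundary path $\beta_{t}$ can consist of contact forms on $Z$; and a Whitney--Graustein computation shows it cannot even be done through $1$--forms of the naive shape $k(r,t)\,d\phi+l(r,t)\,d\theta$ with $d_{Z}\beta_{t}$ nowhere zero --- one must let $\beta_{t}$ depend on $\theta,\phi$. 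I would therefore (i) choose the path $\beta_{t}$, rel boundary, so that $d_{Z}\beta_{t}$ is nowhere zero on $Z$ for all $t$ (a ``fat'' homotopy, written down explicitly using a branched--cover model of $Z$, in the spirit of the cut--and--paste and branch--cover techniques announced in the introduction), arranging in addition that the kernel line field $\ker d_{Z}\beta_{t}$ has no troublesome closed orbits; and (ii) for each $t$, with $\beta_{t}$ fixed, reparametrize the homotopy to adjust $\partial_{t}\beta_{t}$, solve an ODE along the flow of $v_{t}$ for $u_{t}$, and choose $g$ with $g'/g$ of the appropriate sign and magnitude, so that $g\,(\partial_{t}\beta_{t}-d_{Z}u_{t})(v_{t})+g'\,\beta_{t}(v_{t})>0$ everywhere: the second term gives positivity on the open set where $\beta_{t}$ is contact, and the velocity/$u_{t}$ term rescues it on the lower--dimensional locus where $\beta_{t}(v_{t})=0$, using $d_{Z}\beta_{t}\neq0$ there. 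The main obstacle is exactly step (i)--(ii): producing the fat rel--boundary homotopy together with a kernel flow that lets one solve for $u_{t}$, all while staying compatible with the corners. Granting this step, the resulting $\mu$ is the contact structure asserted in the lemma; this in turn yields Lemma~\ref{lem:main} and the existence of the Lutz tube with core $B$.
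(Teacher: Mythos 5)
Your reduction of the contact condition is sensible: the degree count showing that a product ansatz $\lambda+\beta$ forces the forbidden exact symplectic cobordism is correct, and the ansatz $\mu=g(t)\lambda+\beta_t+u_t\,dt$ together with the resulting pointwise inequality along $\ker d_Z\beta_t$ is a legitimate way to exploit the $W$--factor. But the proposal is not a proof: steps (i)--(ii) --- producing a rel--boundary homotopy $\beta_t$ from $\alpha_0^{(0,\infty)}$ to $\alpha_2^{(0,\infty)}$ with $d_Z\beta_t$ nowhere zero, with a kernel line field whose dynamics allow the ODE for $u_t$ to be solved, and with $g$ chosen so the inequality holds everywhere including the corners --- are exactly the mathematical content of the lemma, and you explicitly ``grant'' them. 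Nothing in your argument rules out that the same Giroux--torsion obstruction you invoke against paths of contact forms, and against rotationally symmetric paths, also obstructs the existence of the data required in (i)--(ii); as written you have reduced the lemma to a different unsolved interpolation problem. The phrase ``written down explicitly using a branched--cover model of $Z$'' gestures at the right idea but is not carried out, and applying a branched cover to the $3$--dimensional slice $Z$ is not obviously meaningful.

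For comparison, the paper avoids any pointwise interpolation and works globally. Set $Y=W\times X$ where $X$ is a large disk in the $(p_1,p_2)$--plane times $T^2$, with contact form $\lambda+p_1\,d\theta_1+p_2\,d\theta_2$. By the Bourgeois open--book construction (Lemma~\ref{lem:aux}) the submanifold $W\times\{(0,0)\}\times T^2$ can be isotoped to a codimension--two \emph{contact} submanifold, and the double cover of $Y$ branched over it carries a contact structure obtained by perturbing the pullback form near the branch locus (this is where contactness is verified, and it is automatic precisely because the branch locus is contact). Going once around the branch point doubles the winding of the curve $(f(\phi),g(\phi))$, i.e.\ inserts the two units of torsion; removing one lift $X_1'$ of a small ball $W\times X'$ about the off--center point $p$ gives Lemma~\ref{lem:3.5}, and cutting the result along the lifts of $W\times\gamma_i\times T^2$, where the $\gamma_i$ are flow lines of the radial Liouville field $v_p$, yields exactly the block $W\times[0,1]\times[0,1]\times T^2$ with the boundary behaviour asserted in the lemma. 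If you want to complete your direct approach, the branched cover in the full $(2n+1)$--dimensional manifold is what actually manufactures the ``fat homotopy'' you are missing; I see no way to produce it within the slice-by-slice framework alone.
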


We establish Lemma~\ref{RMN} by first considering a similar lemma for the  
manifold $W \times [0,1] \times T^3$. Here the new manifold can be thought of as
being  obtained from the old one by identifying the boundary of $[0,1] \times
T^2$ by the identity. Once we prove the lemma stated below, Lemma~\ref{RMN} will
follow by removing a suitable portion of the manifold 
$W \times [0,1] \times T^3.$

\begin{lemma}\label{lem:3.5}
There exists a contact structure on $W\times ([0,1]\times S^1)\times T^2$ such 
that near one boundary component $W\times \{0\}\times S^1\times T^2$ the contact
structure is contactomorphic to $\lambda + e^t \alpha_1$ and near the boundary
component $W\times \{1\}\times S^1\times T^2$ it is contactomorphic to $\lambda+
e^t\alpha_2.$ Here $\alpha_1$ and $\alpha_2$ are the contact forms on $T^3$
defined in  Section~\ref{sec:3lutz}. 
\end{lemma}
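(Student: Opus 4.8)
The plan is to build the desired contact structure on $W\times([0,1]\times S^1)\times T^2$ by exhibiting it as (a neighborhood of a piece of) a contact structure on a larger, more symmetric manifold, and then to use the structure theory of $T^3$-invariant contact structures recalled in Section~\ref{sec:3lutz} to interpolate. The key point is that $\alpha_2 = \cos(2\phi)\,d\theta_1+\sin(2\phi)\,d\theta_2$ on $T^3$ is obtained from $\alpha_1 = \cos\phi\,d\theta_1+\sin\phi\,d\theta_2$ by adding Giroux torsion — equivalently, $\xi_2$ is built from $\xi_1$ by inserting a torsion layer $\xi_k^{(a,b)}$ with the winding number raised by one. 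So on $T^3\times[0,1]$ (coordinates $(\phi,\theta_1,\theta_2)$ on $T^3$ and $t\in[0,1]$), there is a contact form $\beta_t$ of the shape $f(\phi,t)\,d\theta_1+g(\phi,t)\,d\theta_2$ with $\beta_0=\alpha_1$, $\beta_1=\alpha_2$, and $\frac{\partial}{\partial t}$ positively transverse to the contact planes throughout: one just lets the pair $(f(\cdot,t),g(\cdot,t))$ trace out, as $\phi$ runs around $S^1$, a curve in $\R^2\setminus\{0\}$ that winds once at $t=0$ and twice at $t=1$, monotonically increasing the winding number, while keeping the contact (monotone winding) condition. This is exactly a one-parameter analogue of the Giroux-torsion picture, and such $T^3$-invariant families are standard.

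First I would make precise the observation of the previous paragraph: produce an explicit $1$-parameter family of contact forms $\beta_t$ on $T^3$ connecting $\alpha_1$ to $\alpha_2$ so that $d(\text{something})$ — more precisely the form $\lambda + e^s\beta$ on $W\times(\text{symplectization})\times T^3$ — is contact, and so that near $t=0$ and $t=1$ the family is locally constant in $t$. Concretely, arrange $\beta_t$ to agree with $\alpha_1$ for $t$ near $0$ and with $\alpha_2$ for $t$ near $1$; since adding Giroux torsion is realized by a path of contact structures on $T^2\times[0,1]$, this path exists. Second, I would thicken this to the manifold at hand: view $[0,1]\times S^1$ (coordinates $(t,\psi)$) and take the contact form $\lambda + e^{h(t,\psi)}\beta_{t}$, or more simply $\lambda + e^{\rho}\,\beta_{t}$ where $\rho$ is the extra symplectization coordinate absorbed into $W$'s neighborhood as in Subsection~\ref{sec:setup}. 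The $S^1$ factor $\psi$ is a free parameter: the construction is $\psi$-independent, so the contact condition on $W\times[0,1]\times S^1\times T^2$ reduces to the contact condition of the $\psi$-independent model, which holds by the first step together with the computation that $\lambda + e^{(\cdot)}\alpha$ is contact whenever $\alpha$ is contact on $T^3$ and the Reeb-type direction is expanding (this is the standard "contactization of a symplectization" computation already invoked in Lemma~\ref{lem:nbhdoffamily} and Subsection~\ref{sec:setup}). Finally I would check the two boundary normal forms: near $W\times\{0\}\times S^1\times T^2$ the form is $\lambda + e^t\alpha_1$ by construction of $\beta_t$ near $t=0$, and near $W\times\{1\}\times S^1\times T^2$ it is $\lambda+e^t\alpha_2$; a small rescaling of the symplectization coordinate, exactly as in the passage from $e^{-t}\lambda+\alpha$ to $\lambda+e^t\alpha$ in Subsection~\ref{sec:setup}, puts these in the precise stated form.

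The main obstacle, and the step I expect to require genuine care rather than a routine check, is \emph{matching the parametrizations at the boundary so that the normal form is literally $\lambda + e^t\alpha_i$} — i.e.\ reconciling the two roles played by exponential/symplectization coordinates (the $t$ in $\lambda+e^t\alpha$ coming from the $W$-direction, versus the interpolation parameter in $[0,1]$). One must choose the interpolating family $\beta_t$ together with the identification of collar coordinates so that the $\frac{\partial}{\partial t}$-direction is honestly the Liouville/expanding direction near $t=0,1$; this is where the freedom to take the interval long (as noted in Subsection~\ref{sec:setup}, "$N_\epsilon$ minus the zero section is exact symplectomorphic to $(-\infty,c)\times W$") and to rescale is used. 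Once the collar coordinates are set up correctly, the global contact condition is a direct consequence of the monotone-winding characterization of $T^3$-invariant contact structures from Section~\ref{sec:3lutz} and poses no difficulty; it is purely the bookkeeping of normal forms at the two ends that needs to be done carefully. The $S^1$ factor, by contrast, is essentially a spectator: it plays no role beyond being carried along, which is precisely why passing from Lemma~\ref{lem:3.5} to Lemma~\ref{RMN} by excising a suitable subset (as the authors announce) is the natural next move.
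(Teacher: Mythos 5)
Your approach does not work, and the obstruction is exactly the one the authors flag at the end of Subsection~\ref{sec:setup}. The central step of your proposal --- a family of $T^3$-invariant contact forms $\beta_t=f(\phi,t)\,d\theta_1+g(\phi,t)\,d\theta_2$ with $\beta_0=\alpha_1$, $\beta_1=\alpha_2$, ``monotonically increasing the winding number'' --- does not exist: the winding number of the loop $\phi\mapsto(f(\phi,t),g(\phi,t))$ about the origin is a homotopy invariant, so to pass from winding $1$ to winding $2$ the curve must cross the origin at some $(\phi_0,t_0)$, and there $\beta_{t_0}$ vanishes on the $T^2$-fibre and is not contact. More robustly, any form of the shape $\lambda+\mu$ with $\mu$ pulled back from the four-manifold $[0,1]\times S^1\times T^2=[0,1]\times T^3$ is contact if and only if $d\mu$ is symplectic there (the cross terms in $(\lambda+\mu)\wedge d(\lambda+\mu)^{k}$ vanish for degree reasons), so with the prescribed collars $\mu=e^t\alpha_1$ near $t=0$ (Liouville field pointing in) and $\mu=e^t\alpha_2$ near $t=1$ (pointing out) you would have built an exact symplectic cobordism from $(T^3,\xi_1)$ to $(T^3,\xi_2)$. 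Capping the concave end with the Stein filling of $(T^3,\xi_1)$ would then strongly fill $(T^3,\xi_2)$, contradicting Eliashberg's theorem that $\xi_n$ is not strongly fillable for $n\geq 2$. No rescaling or lengthening of the interval evades this; the $W$ factor cannot be a spectator.

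The paper's proof uses $W$ in an essential way to get around precisely this. It realizes $W\times T^3$ with form $\lambda+e^t\alpha_1$ as the contact-type boundary of $Y=W\times D^2\times T^2$ with $\eta=\lambda+p_1\,d\theta_1+p_2\,d\theta_2$, isotopes $W\times\{(0,0)\}\times T^2$ to a \emph{contact} submanifold via Bourgeois's open-book embedding $\Phi(x,\theta_1,\theta_2)=(x,F_1(x),F_2(x),\theta_1,\theta_2)$ (Lemma~\ref{lem:aux}; note $F_i$ depends on $x\in W$, so the branch locus is not of product type), and passes to the double branched cover. The covering doubles the angular coordinate of the disk, which is exactly what converts the induced $\xi_1$ on the outer boundary into $\xi_2$, while removing one lift of a small disk away from the branch point supplies the $\xi_1$-end; the correction term $\epsilon f(d(x))^2\gamma$ near the branch locus makes the contact structure genuinely non-split over $[0,1]\times T^3$, which is how the cobordism obstruction is avoided. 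If you want to repair your write-up, the discrete jump in Giroux torsion has to be produced by some such global mechanism (branched cover, or surgery), not by a continuous $T^3$-invariant interpolation.
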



\begin{proof}
Consider the cotangent bundle $T^*T^2=\R^2\times T^2$ with coordinates $(p_1,p_2,\theta_1,\theta_2).$ The
1--form $\beta=p_1\, d\theta_1+ p_2\, d\theta_2$ is the 
primitive of the symplectic form $d\beta$ on $\R^2\times T^2.$ Moreover given any point $p$ in $\R^2$
the lift of the radial vector field $v_p$ centered at $p$ in $\R^2$ to $\R^2\times T^2$ is an
expanding vector field for $d\beta.$ Let $X$ be the disk of radius, say, 10 in
$\R^2\times T^2,$ and let $p=(5,0)\in \R^2.$ The expanding vector field $v_p$
is transverse to $\partial X$ so $\partial X=T^3$ is a hypersurface of contact
type.  Note $T^3$ in this context is naturally thought of as $S^1\times T^2$
with coordinates $(\phi,\theta_1,\theta_2).$ The contact structure induced on
$\partial X$ is easily seen to be $f(\phi)\, d\theta_1+ g(\phi)\, d\theta_2$
where $(f(\phi),g(\phi))$ parameterize an ellipse about the origin in $\R^2.$
Thus the contact structure on $T^3$ is the unique strongly fillable contact
structure $\xi_1$ on $T^3$ (of course this is also obvious since $X$ is a
strong filling of the contact structure). Similarly if $X'$ is a disk of radius
one about $p$ times $T^2$ then it is also a strong symplectic filling of
$(T^3,\xi_1).$ Moreover, $\overline{X-X'}$ is an exact symplectic cobordism
from the symplectization of $(T^3,\xi_1)$ (this is the boundary component of
$\overline{X-X'}$ coming from the boundary of $X'$) to the
symplectization of $(T^3,\xi_1)$ (this is the boundary component of $\overline{X-X'}$ that
is also the boundary of $X$).  See the left hand side of Figure~\ref{fig-2}.

\begin{figure}[ht]
  \relabelbox \small {
  \centerline{\epsfbox{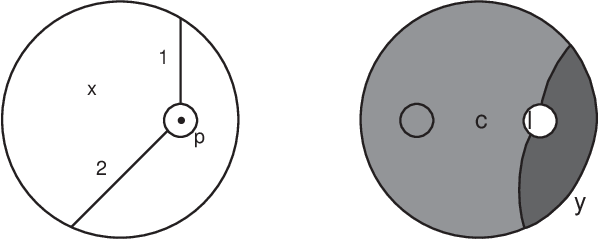}}}
  \relabel{1}{$\gamma_1$}
  \relabel{2}{$\gamma_2$}
  \relabel{x}{$X$}
  \relabel{p}{$X'$}
  \relabel{c}{$C$}
  \relabel{y}{$Y''$}
  \relabel{l}{$X_1'$}
    \endrelabelbox
        \caption{The disks $X$ and $X''$ in the $p_1p_2$-plane are shown on the left, together with the point $p$ and the curves $\gamma_1$ and $\gamma_2$ used in the proof of Lemma~\ref{RMN}. On the right the $p_1p_2$-part of $Y'$ and $X_1'$ is shown. The entire shaded region is the $p_1p_2$-part of $Y''.$ The lighter shaded region is the $p_1p_2$-part of the manifold $C$ constructed in the proof of Lemma~\ref{RMN}. }
     \label{fig-2}
\end{figure}

Let $Y=W\times X$ and $\eta=\lambda+\beta.$ Clearly $\eta$ is a contact form on $Y.$ We now need an auxiliary observation whose proof we give below.
\begin{lemma}\label{lem:aux}
In any open neighborhood of $S=W\times\{(0,0)\}\times T^2$ in $Y,$ $S$ may be
isotoped such  that it is a contact submanifold of $Y.$ 
\end{lemma}


It is well known, and easy to prove \cite{Geiges08b}, that if $\pi:Y'\to Y$ is any
two-fold cover branched over a contact submanifold $S$ then $Y'$ has a
contact structure that agrees with the pullback contact structure away from a
neighborhood of the branch locus. We briefly recall this general construction as the
details will be needed below. If $\alpha$ is a contact form on $Y$ then
$\pi^*\alpha$ is a contact form on $Y'$ away from the branch locus. Near the
branch locus let $\gamma$ be a connection $1$--form on the normal circle bundle
and let $f:Y'\to \R$ be a function that is 1 near the branch locus, zero outside a
slightly larger neighborhood and radially decreasing in between. For small $\epsilon$ the
form $\pi^*\alpha +\epsilon f(x) (d(x))^2\gamma$  where $d(x)$ is the distance
from $x$ to the branch locus, can easily be shown to be contact. 

Set
$Y''=\overline{Y'-X'_1}$  where $X'_1$ is one of the two connected components
of $\pi^{-1}(W\times X').$ See the right hand side of Figure~\ref{fig-2}.
Notice that $\partial Y''$ has two components $B_1\cup B_2.$ A neighborhood of
one of them, $B_1$ say, is also the boundary of $W\times X'_1$ and it is
clearly contactomorphic to a neighborhood of $W\times \{0\}\times T^3$ in $W\times[0,1]\times T^3$ with the contact form
$\lambda+e^t \alpha_1$ and $B_1=W\times\{0\}\times T^3.$ The other boundary
component $B_2$ is also the boundary of $Y'$ and has a neighborhood
contactomorphic to a neighborhood of $W\times \{1\}\times T^3$ in $W\times [0,1]\times T^3$ with contact form
$\lambda+e^t\alpha_2$ and $B_2=W\times\{1\} \times T^3.$ Thus $Y''$ is the
desired contact manifold. 
\end{proof}

We now establish our auxiliary lemma.
\begin{proof}[Proof of Lemma~\ref{lem:aux}]
We adapt a construction from \cite{Bourgeois02}. In particular see that paper
for more on open book decompositions, 
but briefly an open book decomposition of a manifold $M$ is a codimension two
submanifold $N$ with trivial normal bundle together with a locally trivial fibration
$\pi:(M-N)\to S^1$ such that the closure of the fibers are submanifolds of $M$
whose boundaries are $N.$ A contact structure $\xi$ is supported by the open
book decomposition if there is a contact 1--form $\alpha$ such that $\alpha$ is
contact when restricted to $N$ and $d\alpha$ is a symplectic form on the fibers
of $\pi.$ In addition, we also need  the orientation induced on $N$ by $\alpha$
and on $N$ by the fibers, which are in turn  oriented by $d\alpha,$ to agree. As
every contact structure $\xi$ on $M$ has an open book decomposition supporting
it, see \cite{Giroux02}, it
was shown in \cite{Bourgeois02} that one can use this structure to construct a
contact structure on $M\times T^2.$ We show that the construction in that paper
can be used to give the desired embedding of $W\times T^2$ into $Y.$

Let $(N,\pi)$ be an open book decomposition of $W$ that supports the contact
structure $\ker \lambda.$ So $\pi:(W-N)\to S^1$ is a fiber bundle. Choose a function $f:[0,\infty)\to \R$ that is equal to the identity near $0,$
increasing near $0$ and then constant. 
Fixing a
metric on $W,$ let $\rho$ be the distance function from $N$ and set
 $F:W\to \R^2$ to be
$F(x)=f(\rho(x))\pi(x).$ Denote the coordinate functions of $F$ by $F_1$ and
$F_2.$ (Even though $\pi(x)$ is not defined for $x\in N$ we can still define
$F(x)$ since $f(\rho(x))=0$ there.) We define the embedding of $W\times T^2$
into $W\times X'$ by,
\[
\Phi(x, \theta_1,\theta_2)= (x, F_1(x),F_2(x), \theta_1,\theta_2).
\]
Notice that $\Phi^*(\lambda+\beta)=\lambda+F_1(x)\, d\theta_1+ F_2(x)\,
d\theta_2.$ 
One may easily check that this is a contact form, or see \cite{Bourgeois02}. 
\end{proof}

We finish the proof of Lemma~\ref{lem:main} by establishing  Lemma~\ref{RMN}.

\begin{proof}[Proof of Lemma~\ref{RMN}]
Set $\gamma_1= \{(p_1,p_2) :p_1=5, p_2\geq0 \}\cap X$ and $\gamma_2= \{(p_1,p_2) :p_1=p_2+5, p_2\leq 0 \}\cap X.$ Notice that $\gamma_i, i=1,2,$ is a flow line of $v_p.$ 
We consider the lift of $H_i=W\times \gamma_i\times T^2$ to $Y'$ that 
intersects $X'_1.$ Let $C$ be the closure of
the component of $Y''-(H_1\cup H_2)$ containing the branch locus. Notice that topologically $C$ is $W\times
[0,1]\times [0,1]\times T^2.$ One easily sees that a neighborhood of $W\times
\{0\}\times [0,1]\times T^2$ is contactomorphic to the kernel of $\lambda$ plus
the symplectization of the 1--form $\alpha_0^{(0,1)}$  on $[0,1]\times
T^2.$ Similarly a neighborhood of $W\times \{1\}\times [0,1]\times T^2$ is
contactomorphic to the kernel of $\lambda$ plus the symplectization of the
1--form $\alpha_2^{(0,1)}.$ Moreover, since the branched covering map is a 
local diffeomorphism away from the branch locus we see that  near $W\times [0,1]\times
\{0,1\}\times T^2$ the contact form is $\lambda$ plus the symplectization of the 1--form $\alpha_0^{(0,1)}.$
Notice that there is a diffeomorphism of $[0,1]\times T^2$ that takes $\alpha_k^{(0,1)}$ to $\alpha_k^{(0,\infty)}.$ We can use this diffeomorphism  to pull back the contact structure just constructed to the contact structure described in the lemma. 
\end{proof}


\section{From isotropic submanifolds to parameterized families of transverse
curves}\label{sec:getfamily}

In this section we prove our main theorem concerning Lutz twists by finding
an embedded isotropically parameterized 
family of transverse curves given an isotropic submanifold of dimension $n-1$ in
a $(2n+1)$--dimensional  contact manifold. 

\begin{lemma}\label{lem:nbhd}
Let $(M,\xi)$ be a contact manifold  of dimension $2n+1.$
If $B$ is an $(n-1)$--dimensional isotropic submanifold of a contact manifold
$(M,\xi)$ with trivial conformal symplectic normal bundle then $B$ has a
neighborhood contactomorphic to a neighborhood of the zero section of $T^*B$ in
the contact manifold  
\[
(T^*B\times \R^3, \ker(\lambda_{can}+(dz+r^2\, d\theta))),
\]
where $(r,\theta, z)$ are cylindrical coordinates on $\R^3$ and $\lambda_{can}$ is the canonical 1--form on $T^*B.$
\end{lemma}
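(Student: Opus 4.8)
The plan is to recognize this as a special case of Weinstein's isotropic neighborhood theorem, Proposition~\ref{prop:csndet}, once the model contact manifold is understood; the only real content is a short computation inside the model.

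First I would verify that $\bigl(T^*B\times\R^3,\ker\alpha\bigr)$ with $\alpha=\lambda_{can}+dz+r^2\,d\theta$ is a contact manifold containing $B$ as an isotropic submanifold. Writing $\R^3=\R^2\times\R$ with $(r,\theta)$ polar coordinates on $\R^2$ and $z$ the coordinate on $\R$, one has $r^2\,d\theta=x\,dy-y\,dx$, so $\alpha$ is smooth and $d\alpha=d\lambda_{can}+2\,dx\wedge dy$. Since $(dx\wedge dy)^2=0$ and $d\lambda_{can}$ lives in the $(2n-2)$--dimensional $T^*B$ directions, one computes $(d\alpha)^n=2n\,(d\lambda_{can})^{n-1}\wedge dx\wedge dy$, and then $\alpha\wedge(d\alpha)^n=2n\,dz\wedge(d\lambda_{can})^{n-1}\wedge dx\wedge dy\neq0$ everywhere. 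Embed $B$ as the zero section of $T^*B$ over the origin $r=z=0$ of $\R^3$; along $B$ we have $\lambda_{can}=0$, $dz=0$ (as $z$ is constant on $B$) and $r^2\,d\theta=0$ (as $r=0$), so $TB\subset\ker\alpha$ and $B$ is isotropic.

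Next I would compute its conformal symplectic normal bundle. The Reeb field of $\alpha$ is $\partial_z$, and at $x\in B$ the contact hyperplane $\xi_x$ is spanned by $T_xB$, the fibre directions of $T^*B$, and the plane $\R^2=\{z=0\}$. As $d\lambda_{can}$ pairs the zero section with the fibre directions and annihilates the $\R^2$ factor, while $d(r^2\,d\theta)=2\,dx\wedge dy$ annihilates the $T^*B$ directions, the $d\alpha$--orthogonal of $T_xB$ inside $\xi_x$ is exactly $T_xB$ together with that $\R^2$. Hence $CSN(B)_x\cong\R^2$ with the conformal class of $dx\wedge dy$, so $CSN(B)$ is the trivial rank two conformal symplectic bundle over $B$.

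Finally, apply Proposition~\ref{prop:csndet} to $L_0=B\subset(M,\xi)$ and $L_1=B$ the model zero section, with $\phi=\mathrm{id}_B$. By hypothesis $CSN(L_0)$ is trivial of rank two, and by the previous step so is $CSN(L_1)$, so there is a conformal symplectic bundle isomorphism $\Phi\colon CSN(L_0)\to CSN(L_1)$ covering $\phi$: trivialize both bundles as $B\times\R^2$, and on each component of $B$ take $\Phi$ to be the identity or a reflection of $\R^2$ according to whether the two conformal symplectic structures induce the same orientation. Proposition~\ref{prop:csndet} then yields neighborhoods of $B$ in $M$ and of the zero section in $T^*B\times\R^3$ and a contactomorphism between them extending the identity on $B$, which is the assertion. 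Equivalently one could run the Moser argument of Lemma~\ref{lem:nbhdoffamily} verbatim, splitting the normal bundle of $B$ as $\xi/(TB)^\perp\cong T^*B$ plus the Reeb line plus $CSN(B)$ and matching $d\alpha$ along $B$ using triviality of $CSN(B)$, then invoking Theorem~\ref{thm:generalnbhd}. I do not expect a genuine obstacle here; the one point deserving care is the identification of the conformal symplectic normal bundles, and that is immediate from the triviality hypothesis, the rest being the standard Weinstein--Moser machinery of Subsection~\ref{sec:nbhds}.
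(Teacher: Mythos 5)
Your argument is correct and is essentially the paper's proof: both reduce the lemma to Proposition~\ref{prop:csndet} after observing that the zero section of $T^*B$ in the model $(T^*B\times\R^3,\ker(\lambda_{can}+dz+r^2\,d\theta))$ is isotropic with trivial conformal symplectic normal bundle identified with the $r\theta$--plane. You simply spell out the contact condition and the $d\alpha$--orthogonal computation that the paper leaves implicit.
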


\begin{proof}
By Proposition~\ref{prop:csndet} we know that the conformal symplectic normal
bundle of an isotropic submanifold determines the 
contact structure in a neighborhood of the submanifold. The lemma follows as the
zero section of $T^*B$ sitting inside $T^*B\times \R^3$ clearly has trivial
conformal symplectic normal bundle (in fact it can be readily identified with
$\ker(dz+r^2\, d\theta)$ in $T\R^3\subset T(T^*B\times \R^3)$ which along the zero section is just the $r\theta$--plane in $\R^3$).
\end{proof}

Suppose $B$ is an $(n-1)$-dimensional isotropic submanifold of a contact
manifold $(M,\xi)$ with 
trivial conformal symplectic normal bundle. Let $N$ be the neighborhood of $B$
in $M$ that is contactomorphic to a neighborhood of the zero section in $T^*B\times
\R^3$ given in Lemma~\ref{lem:nbhd}. If $K$ is any transverse curve in
$(D^3,\ker(dz+r^2\, d\theta))$ then consider $B'=B\times K$ in $T^*B\times
D^3\cong N\subset M$ where we are thinking of $B$ as the zero section of $T^*B.$
Clearly $B'$ is an embedded isotropically parameterized family of transverse
curves. 


We are now ready to prove our main result. 

\begin{proof}[Proof of Theorem~\ref{main}]
Given an $(n-1)$--dimensional isotropic submanifold $B$ of $(M,\xi)$ with
trivial conformal symplectic normal bundle, Lemma~\ref{lem:nbhd} and the
discussion above yield the desired embedded isotropically parameterized family
of transverse curves $B'=S^1\times B$ in any arbitrarily small neighborhood of
$B.$ Theorem~\ref{thm:genlutz} now allows us to perform a generalized Lutz twist
on $B'.$

In order to show that the homotopy class of almost contact structure is unchanged during this operation
we will construct a 1--parameter family of almost contact structures that starts with the contact structure obtained by performing a generalized Lutz twist and ends with the original contact structure. 
The homotopy will be through confoliations. In the construction we
use  the notation from  Subsection~\ref{sec:setup} and the  proof of
Lemma~\ref{lem:aux}. 


We begin by making a preliminary isotopy of our contact structures that will simplify our argument later. 
Denote by $\beta_{ot}$ the contact form  on $P = N_{\epsilon} \times S(b)$ satisfying the properties described in Lemma~\ref{lem:main} and 
let $\beta_{st}$ denote the standard contact form on $P$. Let $k(t,r)$ be a function with support in $[0,\epsilon'')\times (\delta'', \delta')$ where it is strictly positive. 
Consider the forms $\beta_{st} + k(t,r)\, dr$ and
$\beta_{ot} + k(t,r)\, dr$. Notice that if the supremum of $k$ is sufficiently small they define contact structures isotopic to the contact structures defined by the contact forms $\beta_{st}$ and $\beta_{ot}$, respectively. 
Abusing notation slightly we will continue to denote these forms by $\beta_{st}$ and $\beta_{ot}$, respectively.  


We will break the rest of the argument into three steps. In the first step we homotope the Lutz twisted contact structure (all homotopies will be through almost contact structures) to a confoliation on 
$W\times[a,b]\times[\delta'',\delta']\times T^2$ given as the kernel of a particularly simple 1--form. This homotopy is fixed near the boundary so extends over all of $M.$ 
In the second step we further homotope the confoliation to agree with $\xi$ on $W\times[a,b]\times[\delta'',\delta']\times T^2$ in such a way that the homotopy clearly extends over all of 
$M$ except $N_{\epsilon''}\times S(\delta).$  In the last step we complete the homotopy to $\xi$ by extending the homotopy from step two to $N_{\epsilon''}\times S(\delta).$ 

\smallskip
\noindent
{\bf Step 1:} 
In this step we
homotope our contact structure to a confoliation on
$W\times[a,b]\times[\delta'',\delta']\times T^2$ that is given as the kernel of
a particularly simple 1--form. 
To this end we set $S_s$ to be the embedding of $W\times T^2$
into
$Y$ given by 
\[
\Phi_s(x, \theta_1,\theta_2)= (x, sF_1(x),sF_2(x), \theta_1,\theta_2),
\]
where we use the notation from Lemma~\ref{lem:aux}. Using a Riemannian metric on
$Y$ we can extend this to a $1$--parameter family of embeddings of the normal
disk-bundle $W\times D^2\times T^2$ into $Y.$ (Notice that we can assume each of
the disk-bundles have the same radius $r<1.$) Let $\pi_s:Y'_s\to Y$ be the two
fold branched cover of $Y$ over $S_s.$ We have the 1--parameter family of
1-forms $\alpha_s=\pi_s^*\alpha$ on $Y'.$ Fix a function $f:[0,1]\to [0,1]$
that is $1$ near $0$, $0$ past $r$ and decreasing elsewhere. Let
$\zeta_s=f(d_s(x)) (d_s(x))^2 \gamma_s$ where $d_s:Y'\to \R$ is the distance
from the branch locus of $\pi_s,$ and $\gamma_s$ is the connection $1$--form on
the normal disk bundle to the branch locus. We can extend $\zeta_s$ to all of
$Y'$ and they will be a smooth family
of $1$--forms.  As discussed above, or see \cite{Geiges08b}, for small enough
$c_s$ the 1--form $\alpha_s+c_s\zeta_s$ will be contact for $s\not=0.$ We can
choose the $c_s$ smoothly so that they are decreasing with $s$ and $c_0=0.$ Thus
$\xi_s=\ker (\alpha_s+c_s\zeta_s)$ is a 1--parameter family of hyperplane fields
on $Y'$ that are contact for $s\not=0$ and give a confoliation for $s=0$. We
claim that $\xi_0$ has an almost complex structure that makes it into an almost
contact structure that is homotopic through almost contact structures to the
almost contact structures on $\xi_s$ for $s\not=0.$ To see this fix a metric on
$Y'$ and let $v$ be the (oriented) unit normal vector to $\xi_0.$ Since $\xi_s$
is a smooth family of hyperplane fields there is some small $s$ such that
$\xi_s$ is also transverse to $v.$ We can now project $\xi_s$ along $v$ onto
$\xi_0.$ This projection will be a bundle isomorphism $\xi_s\to \xi_0$ thus we
can use it to define an almost complex structure on $\xi_0.$ Similarly if we
take $\xi'_u=\ker ((1-u)(\alpha_s+c_s\zeta_s)+u(\alpha_0+c_0\zeta_0)),$ for $u\in [0,1],$ then we can use this
projection to define an almost complex structure on $\xi'_u$ for all $u.$ That
is $\xi_0$ with this almost complex structure is homotopic through almost
contact structures to $\xi_s.$ We notice that 
$\alpha_0$ is $\lambda+\beta$ where $\beta$ is a 1--form on $D^2\times T^2.$ 
(Recall that $Y'=W\times D^2\times T^2.$) So $\xi_0$ can be decomposed as $\xi'\oplus
D$ where $D$ is a 4--dimensional distribution. The projection map $p:Y'\to
D^2\times T^2$ maps $D$ isomorphically onto the tangent space of $D^2\times
T^2.$ We can use this isomorphism to put an almost complex structure on
$D^2\times T^2$ which will be used below. 

We have homotoped our contact structure to an almost contact structure on 
$W\times[a,b]\times[\delta'',\delta']\times T^2$ that is given as the kernel of
$\lambda+\beta,$ but notice that the homotopy is fixed near the boundary of our
manifold so this is a homotopy of almost contact structures on our entire
manifold.  

\smallskip
\noindent
{\bf Step 2:}
We will further homotope the almost contact structure
on $W\times[a,b]\times[\delta'',\delta']\times T^2$ to be
$\lambda+e^t\alpha_{std}.$ For this consider the 
$1$--parameter family of $1$--forms $\beta_s=\lambda+ e^t \left(
s\alpha_{std}+(1-s)\beta\right).$ 
Notice that $\xi_s=\ker\beta_s$ is always a hyperplane field and
$\xi_s=\xi'\oplus D_s$ where $\xi'=\ker\lambda$ is  the contact structure on
$W$ and $D_s$ is a 4--dimensional distribution. Moreover if
$p:W\times[a,b]\times[\delta'',\delta']\times T^2\to
[a,b]\times[\delta'',\delta']\times T^2$ is the projection, then $dp$ is an
isomorphism from $D_s$ to the tangent space of
$[a,b]\times[\delta'',\delta']\times T^2.$ We can use this isomorphism to induce
an almost complex structure on $D_s$ for all $s$ and thus $\xi_s$ is an almost
contact structure (as $\xi'$ clearly has an almost complex structure since it is
contact). 

\smallskip
\noindent
{\bf Step 3:}  We are left to extend the homotopy above over the region 
$N_{\epsilon''}\times S(\delta).$ 
Notice that we can assume that there is some
$\eta$ such that 
\[
W\times[a,a+ \eta]\times[\delta'',\delta']\times T^2= \left(W\times[a,b]\times[\delta'',\delta']\times T^2\right)\cap \left(N_{\epsilon''}\times  S(\delta)\right).
\]
Thus we already have our homotopy defined on part of $N_{\epsilon''}\times
D_\delta.$ 
If we consider the $1$--forms $\beta_s=\lambda_{can}+(s\alpha_{std}+
(1-s)\alpha_{ot}) + k(t,r)\, dr$ then $\xi_s=\ker\beta_s$ is a hyperplane field on  $N_{\epsilon''}\times S(\delta)$ that
extends the above homotopy, so we are left to see there is an almost complex structure on these hyperplane fields. (Notice that here is where we needed the term $k$ so as to guarantee that $\beta_s$ is non-singular.)
To that end notice that $\xi_s=\xi_s'+D_s$ where $D_s=\ker ((s\alpha_{std}+(1-s)\alpha_{ot}) + k(t,r)\, dr)$ 
on $S^1\times D^2$ and $\xi'_s$ is a $2(n-1)$--dimensional bundle that maps under the differential of the projection map $N_{\epsilon''}\times S(\delta)\to N_{\epsilon''}$ isomorphically onto the tangent space of $N_{\epsilon''}$. Since $d\lambda_{can}$ gives the tangent space to $N_{\epsilon}$ a $U(n-1)$ structure, we can use this isomorphism to give $\xi_s'$ an almost complex structure. Moreover, $D_s$ is an oriented 2--dimensional bundle and hence has an almost complex structure. Thus $\xi_s$ has an almost complex structure for all $s$. It is clear that where $\beta_s$ is contact this almost complex
structure agrees with the one induced by $d\beta_s$ and hence agrees with the one constructed in 
Step 2. Thus we see this is a
homotopy of almost contact structures and our proof is complete. 
\end{proof}

Using Theorem~\ref{main} we may now easily show all manifolds admitting contact structures admit ps-overtwisted ones. 
\begin{proof}[Proof of Theorem~\ref{maincor}]
Let $\xi$ be a contact structure on $M.$ In a Darboux ball inside of $M$ with coordinates $(x_1,y_1,\ldots, x_n, y_n,z)$ and contact structure $\ker (dz -\sum y_i\,dx_i)$ take a unit sphere $B$ in the $\{x_i\}$--subspace.
It is clear that $B$ is an isotropic submanifold
of $M$ with trivial conformal symplectic normal bundle. Thus we may apply
Theorem~\ref{main} to alter $\xi$ to a contact structure containing an
overtwisted family parameterized by $B.$

The statement about finding an overtwisted family with any core is proven in Theorem~\ref{anycore} below.
\end{proof}

\begin{proof}[Proof of Theorem~\ref{nonunique}]
Let $\xi=\ker(dz-\sum_{i=1}^n y_i\, dx_i)$ be the standard contact structure 
on $\R^{2n+1}$ where we are using Cartesian coordinates $(x_1,y_1,\ldots,
x_n,y_n,z).$  Let $\xi'$ be the result of performing a Lutz twist along some
embedded isotropically parameterized family of transverse curves contained in
some compact ball in $\R^{2n+1}.$  Let $B_i$ be a ball of radius $\frac 14$
about the integral points on the $z$-axis and let $\xi''$ be the result of
performing a Lutz twist along some embedded isotropically parameterized family
of transverse curves contained in each of the $B_i.$ Clearly $\xi$ can be
contact embedded in any contact $2n+1$ manifold (by Darboux's theorem), but
neither $\xi'$ nor $\xi''$ can be embedded in a Stein fillable contact structure
(like the standard contact structure on $S^{2n+1}$), thus they are not
contactomorphic to $\xi.$ Finally notice that $\xi'$ has the property that any
compact set in $\R^{2n+1}$ is contained in another compact set whose complement
can be embedded in any contact manifold, whereas $\xi'$ does not have this
property. Thus  $\xi'$ is not contactomorphic to   $\xi''.$  
\end{proof}

\begin{remark}
{\em 
From Lemma~\ref{lem:nbhd} it is almost immediate that we can construct a contact structure on
$\mathbb{R}^{2n+1}$ which has an embedded overtwisted family modeled on any core which is parameterized
by a closed embedding of $\mathbb{R}$. 
This is analogous to the unique ``overtwisted 
at infinity'' contact structure $\ker (\cos r\,  dz + r \sin r \, d\theta)$ on
$\mathbb{R}^3$ obtained by performing a ``Lutz twist along the $z$-axis'' in three
dimension.  In \cite{NiederkrugerPresas08}
a contact structure on $\mathbb{R}^{2n+1}$ is constructed which contains a
generalized overtwisted family at infinity (termed a generalized
plastikstufe in that paper). With this in mind, it would be interesting to know the
answer to the following question:}

\begin{quest}
Is there a unique contact structure on $\mathbb{R}^{2n+1}$ which contains an
embedded overtwisted family parameterized by $\mathbb{R}$ in the complement of
any compact subset of $\mathbb{R}^{2n+1}$. 
\end{quest}

\end{remark}

\section{Further Discussion}
In the first subsection below we discuss a crude form of a half Lutz twist in 
high dimensions. The problem with this form of a Lutz twist is that it involves
altering not only the contact structure on a manifold but the manifold itself. 
None the less this construction illustrates other ways the techniques in this 
paper can be used in constructing contact structures with various properties. 

From our main theorem we see that the generalized full Lutz twist discussed in
this paper does not affect the homotopy class of an almost contact structure, but
one could ask how other modifications of a contact structure near a submanifold
could affect the homotopy class of 
an almost contact structure. 
In the second subsection we discuss this issue.
In the last subsection below we consider 
which submanifolds of a contact manifold can be the core
of an overtwisted family (that is, which submanifolds can be the elliptic
singularity of an overtwisted family). 

\subsection{Generalized half Lutz twists}
It is interesting to observe that the branched cover construction in
Lemma~\ref{lem:main} cannot be used to perform a half Lutz twist. 
However one can modify that construction to perform a half Lutz twist at the
expense of changing the topology of the ambient manifold. 

Let $(M,\xi)$ be a contact manifold of dimension $2n+1$ and let $B$ be an
$(n-1)$--dimensional 
isotropic submanifold with trivial conformal symplectic normal bundle. 
According to the discussion in Section~\ref{sec:getfamily} we can, in any
neighborhood of $B,$ find a contact embedding of a neighborhood
$N_\epsilon\times S_{std}(d)$ in $M$ where $N_\epsilon$ is a neighborhood of the zero
section in $T^*B$ and $S_{std}(d)$ is the solid torus with contact structure from
Section~\ref{sec:3lutz}. Let $W$ be the unit conormal bundle for $B$ in $T^*B$
which we may think of as a submanifold of the neighborhood above. Taking an
$S^1$ from the $S_{std}(d)$ factor we see an embedding of $W\times S^1$ in the above
neighborhood. The submanifold $W\times S^1$ has a neighborhood $W\times
S^1\times D^3$ in $M.$ Let $M'$ be the manifold obtained from $M$ by removing
$W\times S^1\times D^3$ and gluing in its place $N_\epsilon \times S^1\times
S^2.$ 
\begin{proposition}
Let $(M,\xi)$ be a contact manifold of dimension $2n+1$ and let $B$ be an 
$(n-1)$--dimensional isotropic submanifold with trivial conformal symplectic
normal bundle.  With the notation above we may extend $\xi|_{M-(N_\epsilon\times
S_{std}(d))}$ over $M'$ to obtain a contact structure $\xi'$ such that for some
$\epsilon''<\epsilon$ the contact structure $\xi'$ on $N_{\epsilon''}\times S(d)$
is contactomorphic to $N_{\epsilon''}\times S_{ot}({d_\pi}).$
\end{proposition}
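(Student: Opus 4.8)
The plan is to mimic the branched-cover construction from the proof of Lemma~\ref{lem:main} but to replace the two-fold branched cover by the trivial (disconnected) ``cover'' obtained by cutting, which on the topological level corresponds to the surgery described above. Recall that in Lemma~\ref{lem:main} we built a contact structure on $W\times([0,1]\times[0,1])\times T^2$ by taking the branched double cover of $Y=W\times X$ along the contact submanifold $S=W\times\{(0,0)\}\times T^2$ and then deleting a copy of $W\times X'_1$. A full Lutz twist appeared because going around the branch locus once in the base corresponds to going around twice upstairs, so the Giroux torsion jumped by $2\pi$. To get a half twist one instead wants the base disk itself, cut open along a ray from the branch point $(0,0)$ to the boundary, so that going around contributes only $\pi$ of torsion. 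Cutting $Y$ along the hypersurface $W\times\gamma\times T^2$ (with $\gamma$ a flow line of the expanding field $v_p$ from $(0,0)$ to $\partial X$, as in the proof of Lemma~\ref{RMN}) and then deleting the inner piece containing the core produces, after rounding corners, a manifold whose underlying topology is exactly $M$ with $W\times S^1\times D^3$ replaced by $N_\epsilon\times S^1\times S^2$: removing the $D^2$-neighborhood of the branch locus $W\times T^2$ in $Y$ and gluing in the cut-open disk bundle trades a solid-torus-times-$D^2$ region for $N_\epsilon\times S^2\times S^1$, which is the stated surgery $M'$.

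The key steps, in order, would be: (1) As in Section~\ref{sec:setup}, reduce to extending a contact germ over $W\times[a,b]\times[\delta'',\delta']\times S^1\times S^1$ that near one face looks like $\lambda+e^t\alpha_0^{(0,\infty)}$ and near the opposite face like $\lambda+e^t\alpha_1^{(0,\infty)}$ (Giroux torsion $\tfrac12$ rather than $1$), the other faces carrying $\lambda$ plus a symplectization with $\partial/\partial t$ expanding. (2) Take $Y=W\times X$ with $\eta=\lambda+\beta$ as before and, using Lemma~\ref{lem:aux}, isotope $S=W\times\{(0,0)\}\times T^2$ to a contact submanifold. (3) Instead of the branched double cover, cut $Y$ along $H=W\times\gamma\times T^2$ where $\gamma$ runs from the branch point to $\partial X$; the resulting manifold-with-corners $\widehat Y$ has two new boundary faces, both copies of $H$, and near $H$ the contact form $\eta$ extends as $\lambda$ plus the symplectization of the characteristic form on a slice, so after smoothing corners one obtains a genuine contact manifold. (4) Delete from $\widehat Y$ the component of $\widehat Y \setminus (H_1\cup H_2)$ that does \emph{not} contain the core --- equivalently delete (the cut-open) $W\times X'$ --- and check that what remains is contactomorphic near one boundary component to $W\times[0,1]\times T^3$ with form $\lambda+e^t\alpha_0^{(0,\infty)}$ and near the other to $\lambda+e^t\alpha_1^{(0,\infty)}$. (5) Glue this collar into $M\setminus(N_{\epsilon''}\times S(\delta))$ exactly as in the proof of Lemma~\ref{lem:main}, and identify the underlying smooth manifold with $M'$; along the way verify that $N_{\epsilon''}\times S_{std}(b)$ now carries the half-Lutz-twisted structure $N_{\epsilon''}\times S_{ot}(b_\pi)$, using the dictionary from Subsection~\ref{sec:3lutz} relating $\xi_1^{(\cdot,\cdot)}$ on a thickened torus to the half Lutz tube.

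The main obstacle I expect is Step (3)--(4): verifying that cutting along $H$ really yields a smooth contact manifold after corner-rounding, and that the topology of the result is precisely $M$ with the advertised surgery $N_\epsilon\times S^1\times D^3 \rightsquigarrow N_\epsilon\times S^1\times S^2$ performed --- this is where the ``change of ambient topology'' is forced, since the half-torsion profile $\alpha_0^{(0,\infty)} \to \alpha_1^{(0,\infty)}$ cannot be filled by an exact symplectic cobordism over an honest disk, only over the cut disk. One must check that the two cut faces $H_1, H_2$ of $\widehat Y$ fit together with the rest of $M$ to close up (the extra $S^2\times S^1$-type piece is exactly the disk-bundle neighborhood of $W\times T^2$ that the cut disk replaces), and that the expanding vector field $v_p$, pushed to $\widehat Y$, still provides the $t$-coordinate so that the resulting contact form matches $\lambda + e^t\alpha_{std}$ near the relevant boundary pieces, allowing the gluing into $(M,\xi)$ away from the Lutz tube to go through verbatim as in the proofs of Lemmas~\ref{lem:main} and~\ref{RMN}. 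The remaining verifications (that $\alpha_0^{(0,\infty)}$ on $[\delta'',\delta']\times T^2$ caps off the half-twist correctly, and that the characteristic-distribution conditions of Subsection~\ref{sec:families} still hold so that the Lutz tube contains an overtwisted family modeled on $B$) are routine given the three-dimensional picture recalled in Subsection~\ref{sec:3lutz}.
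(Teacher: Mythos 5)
Your reduction in Step (1) is the right one (it matches Subsection~\ref{sec:setup}, with target profile $\alpha_0^{(0,\infty)}\to\alpha_1^{(0,\infty)}$), but the mechanism you propose for producing the interpolating piece does not work. The branched double cover in Lemmas~\ref{lem:3.5} and~\ref{RMN} yields the \emph{full} twist for a degree reason: the outer boundary circle of $X$ upstairs double covers the one downstairs, so the curve $(k(r),l(r))$ picks up exactly one extra full winding about the origin, i.e.\ $\alpha_0^{(0,\infty)}\to\alpha_2^{(0,\infty)}$. Cutting $Y=W\times X$ itself along $W\times\gamma\times T^2$ and discarding $W\times X'$ adds no winding at all: both horizontal faces of the resulting square carry $\alpha_0^{(0,\infty)}$ in the relevant normalization, so you obtain a trivial piece, not a half twist, and no covering or cutting manipulation of $Y$ produces the required half winding. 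Your construction is also not well defined as stated: after cutting along the single hypersurface $H$, the two copies $H_1,H_2$ are boundary faces of the connected manifold $\widehat Y$, so there is no ``component of $\widehat Y\setminus(H_1\cup H_2)$ not containing the core'' to delete (your $\gamma$ does not even meet $\partial X'$, since it runs from $(0,0)$ to $\partial X$). Finally, the topology change you describe (trading a disk-bundle neighborhood of $W\times T^2$ for a cut-open disk bundle) is not the surgery in the statement, which removes $W\times S^1\times D^3$ with $S^1$ the transverse-curve circle from the $S_{std}(b)$ factor. Your own ``main obstacle'' paragraph defers exactly the steps where the argument breaks.

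The paper's proof uses an entirely different ingredient. It takes from \cite{GayKirby04} a $1$--form $\beta$ on $[a,b]\times[\delta'',\delta']\times S^1\times S^1$ whose differential is an exact \emph{near}-symplectic form: symplectic away from a single circle $\{(c,d,\theta)\}\times S^1$, and with the half-twist boundary behavior. Then $\lambda+\beta$ is contact on $W\times[a,b]\times[\delta'',\delta']\times S^1\times S^1$ except along $W\times\{(c,d,\theta)\}\times S^1$. One excises a neighborhood $U\cong W\times S^1\times D^3$ of this bad locus; by \cite{GayKirby04} a collar of $\partial U$ is $W\times[x,y]\times S^1\times S^2$ with form $\lambda+e^t\alpha'$, where $\alpha'$ is the minimally overtwisted contact form on $S^1\times S^2$. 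Reversing the interval coordinate rewrites this as $e^t\lambda+\alpha'$, so the symplectization direction now points along the $T^*B$ factor and the hole can be capped by $N_\epsilon\times S^1\times S^2$ with form $\lambda_{can}+\alpha'$. This excision-and-capping is simultaneously the source of the $\pi$ of torsion and of the forced surgery $M\rightsquigarrow M'$; your proposal contains no substitute for it.
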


\begin{proof}
Following the outline in Subsection~\ref{sec:setup} we can define the desired
contact structure 
on $N_\epsilon\times S(d)$ everywhere except on $W\times
[a,b]\times[\delta'',\delta']\times S^1\times S^1.$ In the paper
\cite{GayKirby04}, Gay and Kirby construct an exact near symplectic structure on
$ [a,b]\times[\delta'',\delta']\times S^1\times S^1$ that can be used as
described at the end of Subsection~\ref{sec:setup} to try to extend the contact
structure on all of $N_\epsilon\times S(d).$ More precisely, there is a $1$--form
$\beta$ such that $d\beta$ is symplectic on $[a,b]\times[\delta'', \delta']\times
S^1\times S^1$ away from a curve $\{(c,c',\theta)\}\times S^1,$ where $c\in
(a,b), c'\in (\delta'',\delta')$ and $\theta\in S^1.$ Thus $\lambda+\beta$ is a
contact form on $W\times  [a,b]\times[\delta'',\delta']\times S^1\times S^1$
away from $W\times \{(c,c',\theta)\}\times S^1$ and has the necessary boundary
conditions to glue to the desired contact structure. Let $U$ be a neighborhood
of $W\times \{(c,c',\theta)\}\times S^1.$ It is shown in \cite{GayKirby04} that a
neighborhood of the boundary of $U$ is contactomorphic to $W\times [x,y]\times
S^1\times S^2$ with the contact form $\lambda + e^t \alpha'$ where $\alpha'$ is
a contact form on $S^1\times S^2$ giving the minimally overtwisted contact
structure (that is the one in the same homotopy class of plane fields as the
foliation of $S^1\times S^2$ by $S^2$'s). Consider  $U$ as $[-y,-x]\times
W\times S^1\times S^2$ where we use the identity diffeomorphism on most factors
and $t\mapsto -t$ on the interval factor. Notice this is an orientation
preserving diffeomorphism and the contact form in these coordinates can be
taken to be $e^t\lambda+\alpha'.$ Now we can glue in a neighborhood of the zero
section in $T^*B$ times $S^1\times S^2$ and extend the contact structure over
this (using $\lambda_{can}+\alpha'$).
\end{proof}

Since the construction given in the proposition above changes the topology of the ambient manifold, it is not reasonable to think of it as a ``real'' half Lutz twist. So we are left with the following question. 
\begin{quest}
Is there a way to perform a half Lutz twist without changing the topology of the manifold?
\end{quest}

\subsection{Almost contact structures and Lutz twists}
As discussed in Subsection~\ref{sec:almost} an obstruction to two almost contact
structures being 
homotopic is the Chern classes of the almost contact structure. In dimension 3
it is well known that Lutz twisting affects the first Chern class of the contact
structure. In higher dimensions this is not the case. 

In the proof of Theorem~\ref{main} above we showed that  the homotopy class of
almost 
contact structure is unchanged by a  Lutz twist, but one might ask if the
homotopy class can be affected with a generalized half Lutz twist (should one
ever be defined that, unlike the construction in the previous subsection, does not change the
topology of the ambient manifold). 
\begin{proposition}\label{chern}
Let $(M,\xi)$ be a closed contact $(2n+1)$-manifold for $n>1.$ Suppose $B\times
S^1$ is an embedded isotropically 
parameterized family of transverse curves in $(M,\xi)$ with $B$ of dimension
$n-1.$ If $\xi'$ is obtained from $\xi$ by altering the contact structure in a
neighborhood of  $B\times S^1$ then the Chern classes of $c_k(\xi)$ and
$c_k(\xi')$ are equal for $k<\frac{n+1}2.$
\end{proposition}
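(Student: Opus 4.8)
The plan is to exploit the fact that the Lutz twist only changes $\xi$ inside a small neighborhood $N$ of $B\times S^1$, which is an open set diffeomorphic to $N_\epsilon\times S^1\times D^2$, and in particular this neighborhood deformation retracts onto $B\times S^1$, a manifold of dimension $n$. First I would observe that $\xi$ and $\xi'$ agree, as hyperplane fields, on $M\setminus N$, so the difference between them is concentrated on $N$. Since almost contact structures are classified (up to homotopy) by sections of a bundle with fiber $SO(2n+1)/(U(n)\times 1)$, and since any two sections agreeing on $M\setminus N$ differ by an element of a relative homotopy group, the point will be that the Chern classes $c_k(\xi)$ and $c_k(\xi')$, being pulled back from $BU(n)$, can only differ by classes supported in $N$, i.e.\ by the image of $H^{2k}(M, M\setminus N)\to H^{2k}(M)$.

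The key step is then a dimension/connectivity count. By excision and the Thom isomorphism (the normal bundle of $B\times S^1$ is trivial of rank $n+2$, since it lies inside $N_\epsilon\times S^1\times D^2$), we have $H^{2k}(M,M\setminus N)\cong H^{2k}(N, N\setminus(B\times S^1))\cong H^{2k-(n+2)}(B\times S^1)$. Actually it is cleaner to argue homotopy-theoretically: the restriction map on almost contact structures, or rather the difference class, lives in something like $[N/\partial N\,,\, SO(2n+1)/U(n)]$ or is detected by the relative obstruction cocycle, and since $N$ has the homotopy type of the $n$-dimensional complex $B\times S^1$, the relevant difference is an element of $H^{j}(B\times S^1;\pi_j(\,SO(2n+1)/U(n)\,))$ for $j\le n$. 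Feeding this back: a class in $H^{2k}(M)$ that is pulled back from the Thom space / supported on $N$ and that affects $c_k$ must come from $H^{\le n+1}$ in an appropriate sense, forcing $2k\le n+1$, i.e.\ $k<\frac{n+1}2$ gives no change. The cleanest phrasing uses that $c_k$ is determined by the $2k$-skeleton, and the two almost contact structures agree on the $(n+1)$-skeleton after a homotopy (since $N$ retracts to an $n$-complex, we can push the $(n+1)$-skeleton of $M$ off of where the structures differ), hence $c_k(\xi)=c_k(\xi')$ whenever $2k\le n+1$.

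I would therefore structure the proof as: (1) record that $\xi,\xi'$ agree outside the open set $N\simeq N_\epsilon\times S^1\times D^2$, which is homotopy equivalent to the $n$-manifold $B\times S^1$; (2) recall the obstruction-theoretic description of the first Chern class of an almost contact structure as living in $H^2$, and more generally that $c_k$ is pulled back from the $2k$-skeleton via the classifying map to $BU(n)$; (3) argue that since the two $U(n)$-reductions of $TM$ agree away from a set carried by an $n$-complex, they can be homotoped to agree over the $(n+1)$-skeleton of $M$ — this is where a general-position / obstruction argument is needed, using that the space of reductions has fiber $SO(2n+1)/U(n)$ which is simply connected (for $n>1$), so the primary obstruction to homotoping them to agree sits in $H^{j}(M, M\setminus N; \pi_j)$ with $j\ge 2$ and is only nonzero once $j$ exceeds the dimension $n+1$ of $N$ relative to its boundary in the appropriate sense; (4) conclude that $c_k(\xi)=c_k(\xi')$ for $2k\le n+1$.

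The main obstacle I expect is step (3): making precise the claim that two $U(n)$-reductions agreeing outside an open neighborhood homotopy-equivalent to an $n$-dimensional complex can be homotoped to agree on the entire $(n+1)$-skeleton of $M$. One must be careful that $N$ is an \emph{open} neighborhood and use the Thom space $N/\partial N$ (with $\partial N$ a collar at infinity) — the relative obstructions to the homotopy live in $H^{j}(N,\partial N;\pi_{j-1}(SO(2n+1)/U(n)))$, which by Lefschetz duality equals $H_{n+2-j}(B\times S^1;\pi_{j-1})$, vanishing for $j>n+2$; combined with the simple-connectivity of the fiber (so the obstruction starts in degree $\ge 3$, i.e.\ $j\ge 3$) one gets that the structures can be matched on the $(n+1)$-skeleton, hence $c_k$ agree for $2k\le n+1$, matching the stated bound $k<\frac{n+1}{2}$. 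Getting the indexing exactly right so that the cutoff is $k<\frac{n+1}{2}$ rather than off by one will require care, but no deep input beyond standard obstruction theory and the computation that $SO(2n+1)/U(n)$ is $2$-connected.
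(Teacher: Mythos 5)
Your overall strategy is the same as the paper's: the two structures agree outside a neighborhood $N$ of the $n$-dimensional submanifold $B\times S^1$, and one wins by a codimension count (the paper phrases this by choosing a handle decomposition in which $N$ is a union of handles of index $\geq n+1$, so that the $2k$-skeleton lies in $M\setminus N$ whenever $2k\leq n$, and then invoking that $c_k$ is the primary obstruction to a frame over the $2k$-skeleton). However, two of your concrete steps are wrong as stated, and both push your cutoff past the true one. First, the normal bundle of $B\times S^1$ in $M$ has rank $(2n+1)-n=n+1$, not $n+2$; with the correct rank the Thom isomorphism gives $H^{2k}(M,M\setminus N)\cong H^{2k-(n+1)}(B\times S^1)$, which vanishes exactly when $2k<n+1$, i.e. $k<\frac{n+1}{2}$ --- precisely the stated bound, with no room to spare. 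Second, the claim that the $(n+1)$-skeleton of $M$ can be pushed off an $n$-dimensional submanifold of a $(2n+1)$-manifold fails by general position: a $j$-complex generically misses an $n$-submanifold only when $j+n<2n+1$, i.e. $j\leq n$. So you can only arrange that the $2k$-skeleton avoids $N$ for $2k\leq n$, which is equivalent to $2k<n+1$ and again recovers exactly the proposition's range; your asserted conclusion ``$c_k(\xi)=c_k(\xi')$ for $2k\leq n+1$'' is stronger than the proposition and is not justified by your argument (for odd $n$ it would include $k=\frac{n+1}{2}$).

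With those two corrections made, either of your two routes closes: (a) note that $c_k(\xi)-c_k(\xi')$ restricts to zero on $M\setminus N$, hence lies in the image of $H^{2k}(M,M\setminus N)\to H^{2k}(M)$, and this group vanishes for $2k<n+1$ by excision and the Thom isomorphism (or Lefschetz duality, $H^{2k}(\nu,\partial\nu)\cong H_{2n+1-2k}(B\times S^1)=0$ for $2n+1-2k>n$); or (b) the skeleton argument with the corrected bound $2k\leq n$, which is the paper's proof in cohomological disguise. The elaborate obstruction-theoretic machinery involving $\pi_*(SO(2n+1)/U(n))$ and its $2$-connectivity is not needed for the statement as given; the dimension count alone suffices.
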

Note that the proposition implies that the first Chern class of a contact
structure 
cannot be affected by a Lutz twist except in dimension 3. 
\begin{proof}
One can easily construct a handle decomposition of $M$ in which 
a neighborhood of $B\times S^1$ can be taken to be a union of handles of index
larger than or equal to $n+1.$ Moreover the contact structures $\xi$ and $\xi'$
are the same outside a neighborhood of $B\times S^1,$ that is away from the
handles that make up the neighborhood. As $c_k$ is the primary obstruction to
the existence of a $(n-k+1)$-frame over the $2k$ skeleton of $M$ we see that
$c_k$ of $\xi$ and $\xi'$ must be the same for $2k<n+1.$
\end{proof}

In dimension 3 one can use Lutz twists to produce contact structures in any 
homotopy class of almost contact structure. One might hope to do this in higher
dimensions as well, but clearly Proposition~\ref{chern} shows our notion of Lutz
twist (even a more general one than defined here) cannot achieve this. So we ask
the following question.
\begin{quest}
Is there some other notion of Lutz twisting that affects all the Chern classes of a contact structure?
\end{quest}
Or more to the point we have the following question.
\begin{quest}
Is there some notion of Lutz twisting, or some other modification of 
a contact structure, that will guarantee that any manifold $M$ admitting a
contact structure admits one in every homotopy class of almost contact
structure?
\end{quest}

\subsection{Cores of overtwisted families}

In \cite{Presas07} the ps-overtwisted contact structures came from 
ps-overtwisted contact structures of lower dimension. More precisely, the core of
the overtwisted families constructed in dimension $2n+1$ were constructed as a
product of $S^1$ and an overtwisted family in dimension $2n-1.$ Starting in
dimension 3 where the core is just a point, one sees that all the cores of
overtwisted families constructed in \cite{Presas07} are tori of the appropriate
dimension. The ps-overtwisted contact structures constructed in
\cite{NiederkrugerVanKoert07} were constructed by taking the previous examples
and performing surgery on the ambient manifold without affecting the overtwisted
family. Thus, once again, we see that all the overtwisted families are modeled
on tori. From our construction we can show the following result. 
\begin{theorem}\label{anycore}
Let $(M,\xi)$ be a contact manifold of dimension $2n+1$. Given any
$(n-1)$--dimensional 
isotropic submanifold $B$ in $(M,\xi)$ with trivial conformal symplectic normal
bundle there is a contact structure $\xi'$ on $M$ that contains an overtwisted
family modeled on $B.$ Moreover, if $B$ is any abstract $(n-1)$--dimensional manifold (that is not necessarily already embedded in $M$)
with trivial complexified tangent bundle, then there is a contact structure on
$M$ with overtwisted family modeled on $B.$
\end{theorem}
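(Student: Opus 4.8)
\textbf{Proof proposal for Theorem~\ref{anycore}.}

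The plan is to reduce both statements to Theorem~\ref{main} by producing a suitable isotropic embedding of $B$ with trivial conformal symplectic normal bundle. For the first statement, $B$ is already given as an isotropic submanifold with trivial $CSN$, so Theorem~\ref{main} applies verbatim and yields a contact structure $\xi'$ with an overtwisted family modeled on $B$; there is nothing more to do in that case. The content is therefore entirely in the second statement, where $B$ is an abstract $(n-1)$--manifold with trivial complexified tangent bundle and we must first \emph{create} an appropriate embedding inside a Darboux ball of $(M,\xi)$.

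First I would recall, as in the proof of Theorem~\ref{maincor}, that inside any Darboux ball we have coordinates $(x_1,\ldots,x_n,y_1,\ldots,y_n,z)$ with $\xi=\ker(dz+\sum y_i\,dx_i)$, and that the $\{x_i\}$--subspace is a Legendrian $\R^n$ whose submanifolds are automatically isotropic. So it suffices to embed $B$ into $\R^n$. Since $B$ is an $(n-1)$--manifold, a generic smooth map $B\to\R^n$ is an embedding by Whitney's theorem, and we may arrange the image to lie in a small ball so the construction stays inside the Darboux chart. This gives an isotropic embedding $B\hookrightarrow (M,\xi)$ of the correct dimension. The remaining point is to verify that the conformal symplectic normal bundle $CSN(B)$ is trivial. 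By the bundle identity recalled in Subsection~\ref{sec:nbhds},
\[
TB\oplus \xi|_B/(TB)^\perp\oplus CSN(B)\oplus\R\cong TM|_B,
\]
and $\xi|_B/(TB)^\perp\cong T^*B$, while for a submanifold of a Legendrian $\R^n$ sitting in a Darboux chart one computes $TM|_B\cong TB\oplus \nu\oplus\R$ where $\nu$ is the normal bundle of $B$ in $\R^n$, a trivial line bundle plus $TB$-type data; chasing through, $CSN(B)$ becomes isomorphic to the complexification $TB\otimes\C$ (this is exactly why the hypothesis is phrased in terms of the complexified tangent bundle). Hence the hypothesis that $TB\otimes\C$ is trivial is precisely what makes $CSN(B)$ trivial, and Theorem~\ref{main} applies.

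I expect the main obstacle to be the bundle-theoretic identification $CSN(B)\cong TB\otimes\C$ for a submanifold of a Legendrian chart, i.e.\ making precise that the conformal symplectic normal bundle of a generic isotropic embedding of $B$ into a Darboux ball acquires a complex structure under which it is complex-isomorphic to $(TB,\text{standard complexification})$; once this is in hand everything else is either Whitney's embedding theorem or a direct citation of Theorem~\ref{main}. One should also check that the ``abstract'' case genuinely requires no compatibility with a pre-existing embedding of $B$ in $M$ — but since we build the embedding ourselves inside an arbitrary Darboux ball, this is automatic. A remark at the end noting that the trivial-complexified-tangent-bundle hypothesis is automatically satisfied in low dimensions (e.g.\ $n-1\le 2$, or $B$ a torus) would round out the statement.
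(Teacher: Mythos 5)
Your handling of the first statement is fine and agrees with the paper (it is just Theorem~\ref{main} applied to the given $B$). The second statement is where the content lies, and your argument for it has two genuine errors. First, Whitney's theorem does not let you embed an abstract closed $(n-1)$--manifold $B$ into $\R^n$: generic maps $B\to\R^N$ are embeddings only for $N\geq 2(n-1)+1$, and many $(n-1)$--manifolds admit no embedding into $\R^n$ at all ($\R P^2$ does not embed in $\R^3$, most $4$--manifolds do not embed in $\R^5$, and so on). This is why the paper instead embeds $B$ smoothly into a Darboux ball of the full $(2n+1)$--dimensional manifold, where the dimension count does work, and then invokes the $h$--principle for subcritical isotropic embeddings \cite{EliashbergMishachev02} to isotope that embedding to an isotropic one; the $h$--principle, not a Legendrian chart, is the key tool your proposal is missing.

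Second, your identification $CSN(B)\cong TB\otimes\C$ is dimensionally impossible: $CSN(B)$ has real rank $2(n-(n-1))=2$, while $TB\otimes\C$ has real rank $2(n-1)$. For a hypersurface $B$ of the Legendrian $\{x_i\}$--subspace one computes instead $CSN(B)\cong\nu\oplus\nu^*\cong\nu\otimes\C$, where $\nu$ is the normal line bundle of $B$ in $\R^n$; so triviality of $TB\otimes\C$ is neither what that construction gives you nor what it needs. The actual role of the hypothesis is seen through the splitting $\xi|_B\cong TB\oplus T^*B\oplus CSN(B)$: over a Darboux ball $\xi|_B\cong\C^n$ is trivial, so requiring $CSN(B)\cong\C$ amounts to requiring an isomorphism $T(T^*B)\oplus\C\cong\C^n$, which exists precisely when $T(T^*B)\cong TB\otimes\C$ is trivial. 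That isomorphism is then used as the formal data (an isotropic bundle monomorphism $TB\to\psi^*\xi$ with trivial symplectic normal) fed into the $h$--principle. You should rewrite the second half of your proof along these lines.
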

\begin{proof}
The first statement is clear as we can find an embedded isotropically
parameterized 
family of transverse curves $B\times S^1$ as in Section~\ref{sec:getfamily} and
then use Theorem~\ref{thm:genlutz} to perform a Lutz twist to produce a contact
structure $\xi'$ with an overtwisted family modeled on $B.$

For the second statement we need to see that given a $B$ with the required 
properties 
we can embed it in $(M,\xi)$ as an isotropic submanifold with trivial conformal
symplectic normal bundle. It is clear, due to the dimensions involved, that $B$
can be embedded in a ball in $M.$ It is well known that isotropic submanifolds
of dimension less than $n$ satisfy an h-principle \cite{EliashbergMishachev02}.
This $h$-principle states that if an embedding $\psi: B\to M$ is covered by a
bundle map $TB$ to $\xi$ sending the tangent planes of $B$ to isotropic spaces
in $\xi$ then the embedding can be isotoped to an isotropic embedding. Thus we
need to construct a bundle map $TB$ to $\psi^*\xi$ sending $T_pB$ to an
isotropic subspace of $(\psi^*\xi)_p.$ In the end we will also want the
conformal symplectic normal bundle to be trivial. This implies that we need to
see a bundle isomorphism from $T(T^*B)\oplus \C$ to $\psi^*\xi.$ Since $\psi$
can be taken to have its image in a Darboux ball of $M$ we can assume that
$\psi^*\xi$ is the trivial bundle $\C^n.$ Now it is clear that if $T(T^*B)\cong
TB\otimes \C$ is trivial then we have such an isomorphism. 
\end{proof}

From this theorem we see that it is easy to produce overtwisted families modeled
on 
many manifolds. In particular, any oriented 2--manifold, respectively
3--manifold, can be realized as the core of an overtwisted family in a contact
7, respectively 9, manifold.  Moreover, the vanishing of the first Pontryagin
class of the tangent bundle of a simply connected 4--manifold is sufficient to guarantee it can
be made the core of an overtwisted family in a contact 11--manifold. (To see this, notice
that the vanishing of the Pontryagin class implies the vanishing of the second Chern class of
the complexified tangent bundle and thus there is a complex 3-frame for the 
complexified tangent bundle. Thus the complexified tangent bundle 
splits as a trivial complex 3--dimensional bundle and a line
bundle. The complex line bundle must also be trivial for if not the first Chern class of the complexified
tangent bundle would be non-zero.) It would be
very interesting to know the answer to the following question. 
\begin{quest}
If $(M^{2n+1},\xi)$ contains an overtwisted family modeled on $B$ does it also
contain 
an overtwisted family modeled on any, or even some, other $(n-1)$--manifold $B'$
(satisfying suitable tangential conditions)? 
\end{quest}
In dimension 3, overtwisted contact structures are very flexible and various 
questions about them usually have a topological flavor. That is, if something is
true topologically then it is frequently true for overtwisted contact
structures. For example, if two overtwisted contact structures are homotopic as
plane fields in dimension 3 then they are isotopic, \cite{Eliashberg89}. We also know that any
overtwisted contact structure is supported by a planar open book (just like any
3--manifold), \cite{Etnyre04b}. Thus if overtwisted families are the ``right'' generalization of
overtwisted disks to higher dimensional manifolds then we would expect to have similar
results. An affirmative answer to the question above would essentially say you
have a lot of flexibility in the cores of overtwisted families.
Theorem~\ref{anycore} is a step in that direction.


\def\cprime{$'$} \def\cprime{$'$}

\newpage

{\begin{center}{\Large \bf  Erratum to: ``On generalizing Lutz twists"}\end{center}}

\bigskip

\bigskip

The proof of Lemma~3.4 in \cite{EtnyrePancholi11} is incorrect. Below we will describe the problem with the proof and then show how it can easily be repaired in dimension 5. We then observe that Lemma~3.4, and thus the main results of the paper, is true in all dimensions based on recent work of Borman, Eliashberg and Murphy \cite{BormanEliashbergMurphy}. However this approach does not give an explicit construction and hence goes against the sprit of the original paper and in addition all the results  of \cite{EtnyrePancholi11} follow directly from \cite{BormanEliashbergMurphy}.


\noindent
{\em Acknowledgement:} We thank Yasha Eliashberg for pointing out the error in the proof of Lemma~3.4 in \cite{EtnyrePancholi11}. 
The first author was partially supported by a grant from the Simons Foundation (\#342144) and NSF grant DMS-1309073.

\section{Exact Lagrangians, Liouville flows, and the error in the proof of Lemma~3.4}
We begin by recalling the statement of Lemma~3.4 from \cite{EtnyrePancholi11}. To state the lemma we first establish some notation (that is slightly different that what was used in \cite{EtnyrePancholi11}). Consider $T^2\times [0,1]$ with coordinates $(\theta,
\phi, r)$ and the contact structure $\xi_i = \ker \alpha_i$, $i=1,2$, where
\[
\alpha_i= k_i(r)\, d\theta + l_i(r)\, d\phi.
\]
Here we have $k_1(r)=\cos \frac\pi 2 r$ and $l(r)=\sin \frac\pi 2 r$, and for $i=2$ we have $k_2$ and $l_2$ agreeing with $k_1$ and $l_1$ near $r=0$ and $1$, and the curve $(k_2(r),l_2(r))$ in $\R^2$ has $5\pi/2$ winding about the origin. In particular notice that $\xi_2$ is obtained from $\xi_1$ by adding Giroux torsion. Lemma~3.4 from \cite{EtnyrePancholi11} now reads as follows. 

\begin{lemma}\label{RMN1} 
Let $W$ be a manifold with contact form $\lambda$, there is a contact structure on $W \times [0,1] \times ([0,1]  \times T^2)$ such that the following properties are satisfied:
\begin{enumerate}
\item near $W\times \{0\}\times [0,1]\times T^2$  and $W\times [0,1]\times  \{0,1\}\times T^2$ the contact structure is  contactomorphic to $\lambda+e^t \alpha_1,$ and
\item near $W\times \{1\}\times [0,1]\times T^2$ the contact structure is contactomorphic to $\lambda+e^t \alpha_2.$ 
\end{enumerate}
Here $t$ is the coordinate on the first $[0,1]$ factor.
\end{lemma}

See \cite{EtnyrePancholi11} for details on how the main constructions and theorems of the paper follow from this lemma.
The strategy of the proof in \cite{EtnyrePancholi11} was:
\begin{enumerate}
\item To construct a contact structure on $W\times [0,1]\times T^3$ that near $W\times\{0\}\times T^3$ is given by $\lambda+e^t\beta_0$ and near  $W\times\{1\}\times T^3$ is given by $\lambda +e^t\times \beta_1$, where $\beta_i$ is the contact structure on $T^3$ with Giroux torsion $i$ and we are thinking of $T^3$ as $S^1\times T^2$ with the $S^1$-factors Legendrian curves.
\item Then cut $W\times [0,1]\times T^3$ along $W\times [0,1]\times (\{\theta_0,\theta_1\}\times T^2)$ so that one of the resulting pieces is as described in the lemma. 
\end{enumerate}
To try to arrange this let $\beta=p_1\, d\theta_1+p_1\, d\theta_2$ be the Liouville form on  $T^*T^2=\R^2\times T^2$ with coordinates $(p_1,p_2,\theta_1,\theta_2)$. Notice that $\alpha=\lambda+\beta$ is a contact form on $W\times T^*T^2$. We will see below that we can arrange the two items above that are needed for our proof if there is a radial vector field $v$ in $\R^2$ centered at a point $p$ whose flow expands $d\beta$ (that is, $L_vd\beta=d\beta$) and a Lagrangian torus $T^2$ in a small neighborhood of $\{q\}\times T^2\subset T^*T^2$ that is exact with respect to $\iota_vd\beta$ that is isotopic to $\{q\}\times T^2$ by an isotopy disjoint from $\{p\}\times T^2$. One may easily arrange all of this except for either the last requirement of disjointness or the exactness of the Lagrangian torus. In \cite{EtnyrePancholi11} we assumed this could be arranged (though in the presentation there it was not clear these were precisely the conditions necessary), but to the best of our knowledge this cannot be done. More explicitly in \cite{EtnyrePancholi11} we took the Lagrangian torus $T^2\times\{(0,0)\}$ and the radial vector field $v$ to be centered at a point disjoint from the origin. Notice that the torus is exact with respect to $\beta$ but not with respect to $\iota_vd\beta$, and thus the construction does not work.  In the next section we will see that the condition of having an exact Lagrangian torus can be removed in the 5 dimensional setting. (For simplicity or presentation below we will take $p=(0,0)$.)

Let $X=D\times T^2$ where $D$ is a disk of radius $R$ about the origin in $\R^2$ and $R>2$ is some constant. The form $\beta$ restricts to the contact form $\beta_0$ on $T^3=\partial X$. Now let $X'=D'\times T^2$ where $D'$ is a small disk about the origin in $\R^2$.   By noticing that since the radial vector field $v=p_1\frac{\partial}{\partial p_1}+ p_2\frac{\partial}{\partial p_2}$ is the Liouville field for $\beta$ on $T^*T^2$ one easily sees that $\overline{X-X'}$ is a piece of the symplectization of the minimally twisting tight contact structure on $T^3$. 

Let $\gamma_1$ be the intersection of $D$ with the ray  leaving the origin in $\R^2$ that forms an angle of $\pi/2$ with the positive $p_1$-axis and similarly let $\gamma_2$ be the intersection of $D$ with the ray in $\R^2$ that forms an  angle of $-3\pi/4$ with the positive $p_1$-axis. Setting $Y$ equal to the component of $\overline{X-X'}$ cut along the $\gamma_i\times T^2$ that contains points lying above the negative $p_1$-axis in $\R^2$, one easily sees that $Y$ is a piece of the symplectization of $\alpha_{1}$ (using the notation from above) on $T^2\times [0,\pi/2]$ (here we have rescaled $[0,1]$ to $[0,\pi/2]$) and the parts of $Y$ lying above $\gamma_1$ and $\gamma_2$ are the ``vertical"  or flat boundaries of the symplectization). 

If there is an exact Lagrangian torus $T^2$ in $T^*T^2$ as discussed above then Lemma~3.6 in \cite{EtnyrePancholi11} shows there is an embedding  $\Phi\co W\times T^2\to W\times T^*T^2$ so that $\Phi(W\times T^2)$ is contact, disjoint from $Z=W\times \{(0,0)\}\times T^2$ and isotopic to $F_q=W\times \{q\}\times T^2$ in the complement of (a neighborhood of) $Z$. (For convenience we take  $q$ to be  a point on the negative $p_1$-axis in $D$. If this were not the case we might need to re-choose the $\gamma_i$.) We can then let $C$ be the 2-fold cover of $W\times X$ branched over $\Phi(W\times T^2)$. It is well known that $C$ has a contact structure that away from the branched locus is just the lift of the contact structure on $W\times X$, see for example \cite[Theorem~7.5.4]{Geiges08}. Moreover it is clear that the cover is diffeomorphic to $W \times X$. Notice that the boundary of $C$ is $W\times T^3$ and in a neighborhood of the boundary the contact structure is simply $W$ times a piece of the symplectization of the Giroux torsion 1 contact structure on $T^3$, so that the boundary of $C$ is $W$ times the convex end of the piece of the symplectization. 

Notice that we can take $W\times X'$ to be a neighborhood of $Z$ in $W\times X$ that is disjoint from $\Phi( W\times T^2)$ and the isotopy of $\Phi( W\times T^2)$ to $F_q$. Clearly $W\times X'$ lifts to two disjoint copies of $W\times X'$ in $C$. Let $N$ be one of these and set $C'=C\setminus N$. It is clear that $\partial C'-\partial C$ has a neighborhood in $C'$ where the contact structure looks like $W$ times a piece of the symplectization of the standard minimally twisting contact structure on $T^3$, so that the boundary component is $W$ times the concave end of the piece of the symplectization. Furthermore notice that each $\gamma_i\times T^2$ lifts to two copies in $C'$. The copies that intersect with $N$ will divide $C'$ into two pieces. Let $C''$ be the piece that contains the branch locus. Notice that the branched covering map restricted to a neighborhood of $(\partial C''-((\partial C)\cap C''))$ in $C''$ is mapped diffeomorphically (and contactomorphically)  to a neighborhood of $(\partial Y-((\partial X)\cap Y))$ in $Y$. Moreover the remaining boundary component of $C''$ can easily be seen to have a neighborhood that is contactomorphic to $W$ times a piece of the symplectization of $\alpha_2$. From this discussion it should be clear that the contact structure on $W''$ is the structure described in the lemma.

\section{Fixing the error in dimension 5}
In this section we show how to fix the proof of  Lemma~3.4 from \cite{EtnyrePancholi11} in the 5 dimensional case. 

\begin{lemma}\label{RMN11} 
There is a contact structure on $S^1 \times [0,1] \times ([0,1]  \times T^2)$ such that the following properties are satisfied:
\begin{enumerate}
\item\label{1} near $S^1\times \{0\}\times [0,1]\times T^2$  and $S^1\times [0,1]\times  \{0,1\}\times T^2$ the contact structure is  contactomorphic to $d\theta+e^t \alpha_1,$ and
\item\label{2} near $S^1\times \{1\}\times [0,1]\times T^2$ the contact structure is contactomorphic to $d\theta+e^t \alpha_2,$ 
\end{enumerate}
where $\theta$ is the angular coordinate on $S^1$ and $t$ is the coordinate on the first $[0,1]$ factor.
\end{lemma}

\begin{proof}
We will be considering $S^1\times T^*T^2$ with the contact structure $\alpha=d\theta+p_1\, d\theta_1+ p_2\, d\theta_2$. 
From the discussion in the previous section we only need to check that there is an embedding $\Phi\co T^3\to S^1\times T^*T^2$ so that $\Phi(T^3)$ is contact, disjoint from $Z=S^1\times \{(0,0)\}\times T^2$ and isotopic to $F_q=S^1\times \{q\}\times T^2$ in the complement of (a neighborhood of) $Z$ where $q=(-1-\epsilon,0)$, for some small $\epsilon>0$, is a point in $\R^2$.  

Using coordinates $(\phi,\phi_1,\phi_2)$ on $T^3$ we define
\[
\Phi (\phi,\phi_1,\phi_2)= (\phi+\phi_2, \sin \phi, -1-\epsilon+ \cos\phi, \phi_1,\phi_2).
\]
Now we see
\begin{align*}
\beta= \Phi^*\alpha& = d\phi+ d\phi_2+ (\sin\phi)\, d\phi_1+ (-1-\epsilon)\, d\phi_2+(\cos\phi)\, d\phi_2\\
&= d\phi + (\sin\phi) \, d\phi_1 + (\cos\phi)\, d\phi_2 - \epsilon\, d\phi_2.
\end{align*}
and
\[
d\beta= (\cos\phi)\, d\phi\wedge d\phi_1 - (\sin\phi)\, d\phi\wedge d\phi_2.
\]
Thus
\begin{align*}
\beta\wedge d\beta&=(\sin^2\phi+\cos^2\phi)\, d\phi\wedge d\phi_1\wedge d\phi_2 -\epsilon(\cos\phi)\,  d\phi\wedge d\phi_1\wedge d\phi_2 \\
&= (1-\epsilon\cos\phi)\,  d\phi\wedge d\phi_1\wedge d\phi_2.
\end{align*}
Since $(1-\epsilon\cos\phi)>0$ we have a contact embedding. Also note 
\[
\Phi_\delta (\phi,\phi_1,\phi_2)= (\phi+\phi_2, \delta(\sin \phi), -1-\epsilon+ \delta(\cos\phi), \phi_1,\phi_2)
\]
is an isotopy from $\Phi$ to a map with image the $T^3$ above $(0,-1-\epsilon)$ in $\R^2$ and the isotopy is disjoint from the $T^3$ above $(0,0)$.
\end{proof}

\section{Overtwisted contact structure approach}\label{ota}
In this section we show that Lemma~3.4 from \cite{EtnyrePancholi11}, recalled as Lemma~\ref{RMN1} above, is indeed true due to Borman, Eliashberg, and Murphy's recent breakthrough \cite{BormanEliashbergMurphy}. 

We first note that  Lemma~\ref{RMN1} explicitly defines a contact structure near the boundary of $W \times [0,1] \times ([0,1]  \times T^2)$. It is easy to check that $\alpha_2$ and $\alpha_1$ are homotopic, rel boundary, as plane fields, {\em cf.\ }\cite[Lemma~4.5.3]{Geiges08}. Let $\alpha_t$ be the homotopy. Now $e^{-t}\lambda +\alpha_{f(t)}$, for some function $f(t)$, extends the contact form from a neighborhood of the boundary of $W \times [0,1] \times ([0,1]  \times T^2)$ to a nonsingular form on the whole manifold.  Moreover its kernel splits as $\xi^2\oplus \xi'$ where $\xi^2$ is contained in the tangent space of $[0,1]\times T^2$ and $\xi'$ projects isomorphically onto the tangent space of $W\times [0,1]$. Thus $\xi'$ inherits a complex structure from  $e^{-t} \lambda$ and $\xi^2$ inherits one as an oriented plane field. Thus we have constructed an almost contact structure on $W \times [0,1] \times ([0,1]  \times T^2)$ that extends our given contact structure. The main result of \cite{BormanEliashbergMurphy} implies this almost contact structure is homotopic to an 
actual contact structure by a homotopy that is fixed outside any open neighborhood of the ``non-contact" region. The resulting contact structure can be taken to be the one promised by Lemma~3.4 in \cite{EtnyrePancholi11}. 

\def\cprime{$'$} \def\cprime{$'$}

\end{document}